\documentclass[3p,number]{elsarticle}
\usepackage{amssymb,paralist,mathpazo,charter}
\usepackage[normalem]{ulem}\renewcommand{\underline}[1]{\uline{#1}}
\usepackage[raiselinks=false,colorlinks=true,citecolor=blue,urlcolor=blue,linkcolor=blue,bookmarksopen=true]{hyperref}
\usepackage{xcolor,bookmark} 
\usepackage[frame,arrow,arc,dvips]{xy}
\newtheorem{theorem}{Theorem}
\newtheorem{lemma}[theorem]{Lemma}
\newtheorem{corollary}[theorem]{Corollary}

\newtheorem{proposition}[theorem]{Proposition}
\newproof{proof}{Proof}\let\oldendproof\endproof\def\endproof{\qed\oldendproof} 
\hyphenpenalty=5000\tolerance=1000
\begin{document}

\title{On edge-sets of bicliques in graphs}

\author[uba]{Marina Groshaus}
\ead{groshaus@dc.uba.ar}

\author[sfu]{Pavol Hell}
\ead{pavol@sfu.ca}

\author[wlu]{Juraj Stacho\corref{cor}\fnref{presadd}}
\ead{stacho@cs.toronto.edu}

\address[uba]{Universidad de Buenos Aires, Facultad de Ciencias Exactas y Naturales,
Departamento~de~Computaci\'on, Buenos Aires, Argentina}

\address[sfu]{Simon Fraser University, School of Computing Science, 8888
University Drive, Burnaby, B.C., Canada V5A 1S6}

\address[wlu]{Wilfrid Laurier University, Department of Physics \& Computer
Science, 75~University Ave W, Waterloo, ON N2L 3C5, Canada}

\cortext[cor]{Corresponding author} 

\fntext[presadd]{{\em Present address:} University of Warwick, Mathematics
Institute, Zeeman Building, Coventry, CV4 7AL, United Kingdom}

\begin{abstract}
A {\em biclique} is a maximal induced complete bipartite subgraph of a graph. We
investigate the intersection structure of edge-sets of bicliques in a graph.
Specifically, we study the associated {\em edge-biclique hypergraph} whose
hyperedges are precisely the edge-sets of all bicliques.  We characterize graphs
whose edge-biclique hypergraph is {\em conformal} (i.e., it is the clique
hypergraph of its 2-section) by means of a single forbidden induced obstruction,
the triangular prism.  Using this result, we characterize graphs whose
edge-biclique hypergraph is {\em Helly} and provide a polynomial time
recognition algorithm.  We further study a hereditary version of this property
and show that it also admits polynomial time recognition, and, in fact, is
characterized by a finite set of forbidden induced subgraphs.  We conclude by
describing some interesting properties of the 2-section graph of the
edge-biclique hypergraph.
\end{abstract}

\begin{keyword}
biclique \sep clique graph\sep intersection graph\sep hypergraph\sep
conformal\sep Helly \sep 2-section\smallskip

\noindent{\em AMS classification:} 05C62, 05C75
\end{keyword}

\maketitle

\section{Introduction}
The intersection graph of a collection of sets is defined as follows. The
vertices correspond to the sets, and two vertices are adjacent just if the
corresponding sets intersect. Intersection graphs are a central theme in
algorithmic graph theory because they naturally occur in many applications.
Moreover, they often exhibit elegant structure which allows efficient solution
of many algorithmic problems. Of course, to obtain a meaningful notion, one has
to restrict the type of sets in the collection. In fact, \cite{sur-deux}, every
graph can be obtained as the intersection graph of some collection of sets. By
considering intersections of intervals of the real line, subtrees of a tree, or
arcs on a circle, one obtains interval, chordal, or circular-arc graphs,
respectively. For these classes, a maximum clique or a maximum independent set
can be found in polynomial time \cite{gol}.  We note that one can alternatively define an
interval graph as an intersection graph of connected subgraphs of a path;
similarly intersection graphs of connected subgraphs of a tree produce chordal
graphs, and intersection graphs of connected subgraphs of a cycle produce
circular-arc graphs. More generally, one can consider intersections of
particular subgraphs of arbitrary graphs. This naturally leads to intersections
of edges, cliques, or bicliques of graphs which correspond to line graphs,
clique graphs, and biclique graphs, respectively.

We focus on edge intersections of subgraphs. The edge intersection graph of a
collection of subgraphs is defined in the obvious way, as the intersection graph
of their edge-sets. In hypergraph terminology, this can be defined as the
line graph of the hypergraph whose hyperedges are the edge-sets of the
subgraphs.  We say that subgraphs are edge intersecting if they share at least
one edge of the graph. For instance, the EPT graphs from \cite{ept} are exactly
the edge intersection graphs of paths in trees.  For another example, consider
the double stars of a graph $G$, i.e., the subgraphs formed by the sets of edges
incident to two adjacent vertices. The edge intersection graph of double stars
of $G$ is easily seen to be precisely the square of the line graph of $G$.  In
contrast, if we consider the stars of $G$, i.e., sets of edges incident with
individual vertices, then the edge intersection graph of the stars of $G$ is the
graph $G$ itself \cite{sur-deux}.

In this context, one can study edge intersections of particular subgraphs by
turning the problem into a question about vertex intersections of cliques of an
associated auxiliary graph. In this auxiliary graph, vertices correspond to
edges of the original graph $G$, and two vertices are adjacent just if the
corresponding edges belong to one of the particular subgraphs considered.  In
the language of hypergraphs, this graph is defined as the two-section of the
hypergraph of the edge-sets of the subgraphs.  For instance, in line graphs
vertices are adjacent if and only if the corresponding edges belong to the same
star of $G$.  A similar construction produces the so-called edge-clique graphs
from \cite{ref5} (see also \cite{ref4,ref3,ref2,ref9,ref10}).  Naturally, every
occurence of the particular subgraph in $G$ corresponds to a clique in such
auxiliary graph, and although the converse is generally false, one may obtain
useful information by studying the cliques of the auxiliary graph.

Next, we turn our attention to the Helly property. A collection of sets is said
to have the Helly property if for every subcollection of pairwise intersecting
sets there exists an element that appears in each set of the subcollection. For
instance, any collection of subtrees of a tree has the Helly property. On the
other hand, arcs of a circle or cliques of a graph do not necessarily have the
Helly property. Note that it is, in fact, the Helly property that allows us to
efficiently find a maximum clique in a chordal graph or in a circular-arc graph
(where the Helly property is ``almost'' satisfied \cite{gol}). By comparison, finding a
maximum clique appears to be hard in clique graphs (intersection graphs of
cliques). For a similar reason, recognizing chordal graphs and circular arc
graphs is possible in polynomial time \cite{gol}, whereas it is hard for clique
graphs \cite{clique-graph-np-hard}.

Alternatively, one can impose the Helly property on intersections, and then
study the resulting class of graphs.  For instance, cliques of a graph do not
necessarily satisfy the Helly property, but if we only consider graphs in which
they do, we obtain the class of clique-Helly graphs studied in \cite{clique-helly-prisner}.
In the same way, one can study the classes of neighbourhood-Helly, disc-Helly,
biclique-Helly graphs \cite{ref6}, and also their hereditary counterparts
\cite{ref7, clique-helly}.

\bigskip

In this paper, we investigate the intersections of edge-sets of bicliques.  With
each graph $G$ we associate the {\em edge-biclique hypergraph}, denoted by
${\cal EB}(G)$, defined as follows.  The vertices of ${\cal EB}(G)$ are the
edges of $G$, and the hyperedges of ${\cal EB}(G)$ are the edge-sets of the
bicliques of $G$.  We remark that while for cliques the usual vertex
intersection graphs (i.e., clique graphs and hypergraphs) are the most natural
construct, for bicliques both the vertex and the edge intersection graphs are
natural, and have interesting structure. (See \cite{ref8} for a characterization
of vertex intersection graphs of bicliques.)

The paper is structured as follows. First, in \S 2 we observe some basic
properties of the two-section graph of the edge-biclique hypergraph ${\cal
EB}(G)$. This will allow to prove that ${\cal EB}(G)$ is {\em conformal} (it is
the hypergraph of cliques of its two-section) if and only if $G$ contains no
induced triangular prism. Next, in \S 3 we discuss
the Helly property and prove that ${\cal EB}(G)$ is Helly if and only if the
clique hypergraph of the two-section of ${\cal EB}(G)$ is Helly. This will imply
polynomial time testing for the Helly property on ${\cal EB}(G)$.  In \S 4 we
look at a hereditary version of this property by studying graphs $G$ such that
for every induced subgraph $H$ of $G$, the hypergraph ${\cal EB}(H)$ is Helly.  We show that the
class of such graphs admits a finite forbidden induced subgraph
characterization. This will also yield a polynomial time recognition algorithm
for the class.
In \S 5, we conclude the paper by further discussing properties of the
two-section graph of ${\cal EB}(G)$. In particular, we compare it to the line
graph of $G$, point out some small graphs that are not two-sections of
edge-biclique hypergraphs, and characterize graphs whose every induced subgraph
is the two-section of some edge-biclique hypergraph.  \medskip

\begin{figure}[t!]
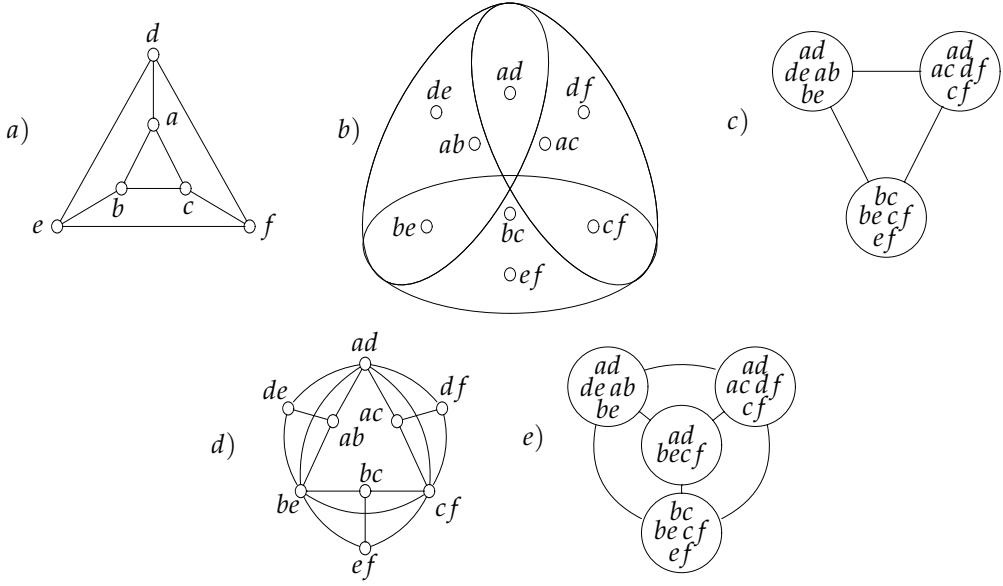

\begin{center}
$\xy/r2pc/:
(0,1)*[o][F]{\phantom{s}}="a";
(0.5,0)*[o][F]{\phantom{s}}="b";
(-0.5,0)*[o][F]{\phantom{s}}="c";
(0,2.1)*[o][F]{\phantom{s}}="d";
(1.5,-0.6)*[o][F]{\phantom{s}}="e";
(-1.5,-0.6)*[o][F]{\phantom{s}}="f";
{\ar@{-} "a";"b"};
{\ar@{-} "a";"c"};
{\ar@{-} "b";"c"};
{\ar@{-} "a";"d"};
{\ar@{-} "b";"e"};
{\ar@{-} "c";"f"};
{\ar@{-} "e";"f"};
{\ar@{-} "d";"f"};
{\ar@{-} "d";"e"};
"a"+(0.3,0.1)*{a};
"b"+(0.05,-0.3)*{c};
"c"+(-0.05,-0.3)*{b};
"d"+(0,0.35)*{d};
"e"+(0.3,0)*{f};
"f"+(-0.3,0)*{e};
(-2.1,0.9)*{a)};
\endxy$
\qquad
$\xy/r2pc/:
(0,1.5)*[o][F]{\phantom{s}}="ad";
(0.55,0.7)*[o][F]{\phantom{s}}="ab";
(-0.55,0.7)*[o][F]{\phantom{s}}="ac";
(0,-0.4)*[o][F]{\phantom{s}}="bc";
(1.3,-0.6)*[o][F]{\phantom{s}}="eb";
(-1.3,-0.6)*[o][F]{\phantom{s}}="fc";
(-1.15,1.2)*[o][F]{\phantom{s}}="df";
(1.15,1.2)*[o][F]{\phantom{s}}="de";
(0,-1.35)*[o][F]{\phantom{s}}="ef";
"ab"+(0.35,0)*{ac};
"bc"+(0.05,-0.3)*{bc};
"ac"+(-0.35,0)*{ab};
"de"+(-0.05,0.35)*{df};
"df"+(0.05,0.35)*{de};
"ef"+(0.35,-0.05)*{ef};
"fc"+(-0.35,0)*{be};
"eb"+(0.35,0)*{cf};
"ad"+(0,0.35)*{ad};
(-2.5,0.9)*{b)};
(-1.35,-0.3);(-0.85,0.7)*\dir{},{\ellipse<58pt,26pt>{-}};
(-0.35,1.7);(-0.85,0.7)*\dir{},{\ellipse<58pt,26pt>{-}};
(1.35,-0.3);(0.85,0.7)*\dir{},{\ellipse<58pt,26pt>{-}};
(0.35,1.7);(0.85,0.7)*\dir{},{\ellipse<58pt,26pt>{-}};
(1,-0.88);(0,-0.88)*\dir{},{\ellipse<55pt,26pt>{-}};
\endxy$
\qquad
$\xy/r2.3pc/:
(-1,1.6)*+++++[o][F]{}="a";
(1,1.6)*+++++[o][F]{}="b";
(0,-0.4)*+++++[o][F]{}="c";
"a"+(0,0.30)*{{{ad}}};
"a"*{{{de}\,{ab}}};
"a"+(0,-0.30)*{{{be}}};
"b"+(0,0.30)*{{{ad}}};
"b"*{{{ac}\,{df}}};
"b"+(0,-0.30)*{{{cf}}};
"c"+(0,0.30)*{{{bc}}};
"c"*{{{be}\,{cf}}};
"c"+(0,-0.30)*{{{ef}}};
{\ar@{-} "a";"b"};
{\ar@{-} "a";"c"};
{\ar@{-} "b";"c"};
(-2.0,0.9)*{c)};
\endxy$\medskip

$\xy/r2pc/:
(0,2.2)*[o][F]{\phantom{s}}="ad";
(-1,0.2)*[o][F]{\phantom{s}}="be";
(1,0.2)*[o][F]{\phantom{s}}="cf";
(-1.2,1.5)*[o][F]{\phantom{s}}="de";
(-0.5,1.3)*[o][F]{\phantom{s}}="ab";
(1.2,1.5)*[o][F]{\phantom{s}}="df";
(0.5,1.3)*[o][F]{\phantom{s}}="ac";
(0,0.2)*[o][F]{\phantom{s}}="bc";
(0,-0.7)*[o][F]{\phantom{s}}="ef";
{\ar@{-}@/_0.3pc/ "ad";"de"};
{\ar@{-}@/^0.3pc/ "ad";"df"};
{\ar@{-} "ad";"ab"};
{\ar@{-} "ad";"ac"};
{\ar@{-} "ab";"be"};
{\ar@{-}@/^0.3pc/ "df";"cf"};
{\ar@{-}@/^0.3pc/ "cf";"ef"};
{\ar@{-}@/_0.3pc/ "de";"be"};
{\ar@{-} "de";"ab"};
{\ar@{-}@/_0.7pc/ "ad";"be"};
{\ar@{-} "be";"bc"};
{\ar@{-}@/_0.3pc/ "be";"ef"};
{\ar@{-}@/_0.7pc/ "be";"cf"};
{\ar@{-} "ac";"cf"};
{\ar@{-} "df";"ac"};
{\ar@{-}@/^0.7pc/ "ad";"cf"};
{\ar@{-} "bc";"cf"};
{\ar@{-} "bc";"ef"};
"ad"+(0,0.35)*{{ad}};
"de"+(-0.2,0.3)*{{de}};
"df"+(0.2,0.3)*{{df}};
"be"+(-0.2,-0.2)*{{be}};
"cf"+(0.3,-0.3)*{{cf}};
"ef"+(0,-0.3)*{{ef}};
"ab"+(0.3,-0.25)*{{ab}};
"ac"+(-0.4,0.1)*{{ac}};
"bc"+(0.1,0.3)*{{bc}};
(-2.2,0.9)*{d)};
\endxy
\qquad
\xy/r2.3pc/:
(-1,1.6)*+++++[o][F]{}="a";
(1,1.6)*+++++[o][F]{}="b";
(0,-0.4)*+++++[o][F]{}="c";
(0,0.8)*+++++[o][F]{}="d";
"d"+(0,-0.15)*{{{be}{cf}}};
"d"+(0,0.15)*{{{ad}}};
"a"+(0,0.30)*{{{ad}}};
"a"*{{{de}\,{ab}}};
"a"+(0,-0.30)*{{{be}}};
"b"+(0,0.30)*{{{ad}}};
"b"*{{{ac}\,{df}}};
"b"+(0,-0.30)*{{{cf}}};
"c"+(0,0.30)*{{{bc}}};
"c"*{{{be}\,{cf}}};
"c"+(0,-0.30)*{{{ef}}};
{\ar@{-}@/^0.7pc/ "a";"b"};
{\ar@{-}@/_1.5pc/ "a";"c"};
{\ar@{-}@/^1.5pc/ "b";"c"};
{\ar@{-} "d";"a"};
{\ar@{-} "d";"b"};
{\ar@{-} "d";"c"};
(-2.0,0.9)*{e)};
\endxy
$\end{center}
\normalsize
\caption{{\em a)} $G$, {\em b)} ${\cal EB}(G)$, {\em c)} the line graph of
${\cal EB}(G)$, {\em d)} $L_G=$ the 2-section of ${\cal EB}(G)$, {\em e)} the clique
graph of $L_G$.\label{fig:5}}
\end{figure}

\section{Notation and Basic Definitions}

A {\em graph} $G=(V,E)$ consists of a vertex set $V$ and a set $E$ of 
edges (unordered pairs from $V$). A {\em hypergraph} ${\cal H}=(V,{\cal E})$
consists of a vertex set $V$ and a set ${\cal E}\subseteq 2^V$ of
hyperedges (subsets of $V$).
For a set $X$ of vertices of a graph $G$, we denote by $G[X]$ the subgraph of $G$ induced by $X$. A
set $X$ is a {\em clique} of $G$ if $G[X]$ is a complete graph and $X$ is
(inclusion-wise) maximal with this property. A set $X$ is a
{\em biclique} of $G$ if $G[X]$ is a complete bipartite graph and $X$ is
(inclusion-wise) maximal with this~property.

For a hypergraph ${\cal H}=(V,{\cal E})$ and a subset ${\cal E'}\subseteq{\cal
E}$, we say that ${\cal H}'=(V,{\cal E}')$ is a {\em partial hypergraph} of~${\cal H}$.  A {\em subhypergraph} of ${\cal H}$ {\em induced} by a set $A\subseteq V$
is the hypergraph ${\cal H}[A]=(A,\{X\cap A~|~X\in{\cal E}\}\setminus\{\emptyset\})$.

To make the presentation clearer, we shall use capital letters $G,H,...$ to
denote graphs and caligraphic letters ${\cal G},{\cal H},...$ to denote
hypergraphs. Similar convention shall be used for graph and hypergraph
operations. In particular, the following operations shall be used throughout the
paper.

Let ${\cal H}=(V,{\cal E})$ be a hypergraph. The \underline{\em dual hypergraph}
of ${\cal H}$, denoted by ${\cal H^*}$, is the hypergraph whose vertex set is
${\cal E}$ and whose hyperedges are $\{{\cal X}_v~|~v\in V\}$ where ${\cal
X}_v=\{X~|~X\in{\cal E}\wedge X\ni v\}$.  In other words, each ${\cal X}_v$
consists of all hyperedges of $H$ that contain $v$.
The \underline{\em 2-section} of ${\cal H}$, denoted by $({\cal H})_2$, is the
graph with vertex set $V$ where two vertices $u,v\in V$ are adjacent if and only
if $u,v\in X$ for some $X\in{\cal E}$.
The \underline{\em line graph} of ${\cal H}$, denoted by $L({\cal H})$, is the
graph with vertex set ${\cal E}$ where $X,X'\in {\cal E}$ are adjacent if and
only if $X\cap X'\neq\emptyset$. Note that $L({\cal H})$ is the 2-section
of the dual hypergraph of $\cal H$.

Let $G=(V,E)$ be a graph.  The \underline{\em line graph} of $G$, denoted by
$L(G)$, is the graph with vertex set $E$ where two edges of $E$ are adjacent if
and only if they share an endpoint in $G$.
The \underline{\em clique hypergraph} of $G$, denoted by ${\cal K}(G)$, is the
hypergraph whose vertex set is $V$ and whose hyperedges are the cliques
of $G$.
The \underline{\em clique graph} of $G$, denoted by $K(G)$, is the graph
whose vertices are the cliques of $G$ where two cliques are adjacent if
and only if they have a vertex in common. In other words, $K(G)$ is the line
graph of the clique hypergraph ${\cal K}(G)$.
The \underline{\em edge-biclique hypergraph} of $G$, denoted by ${\cal EB}(G)$,
is the hypergraph with vertex set is $E$ whose hyperedges are the edge-sets of
the bicliques of~$G$.
The \underline{\em biclique line graph} of $G$, denoted by $L_G$, is the graph
with vertex set $E$ where two edges of $E$ are adjacent if they belong to a
common biclique of $G$.  Note that $L_G$ is the 2-section~of~${\cal EB}(G)$.

For the reader's convenience, we summarize these notions in the following two
tables.\vspace{-1ex}

\begin{center}
\begin{tabular}{c|c|c|c}
${\cal H}=(V,{\cal E})$ & notation & vertices & (hyper)edges\\
\hline& & &\vspace{-2.2ex}\\
dual & ${\cal H}^*$ & hyperedges & hyperedges sharing a common vertex\\
line graph & $L({\cal H})$ & hyperedges & two intersecting hyperedges\\
2-section & $({\cal H})_2$ & vertices & two vertices in a common hyperedge
\end{tabular}\medskip

\begin{tabular}{c|c|c|c}
$G=(V,E)$ & notation & vertices & (hyper)edges\\
\hline& & &\vspace{-2.2ex}\\
line graph & $L(G)$ & edges & two edges sharing a vertex\\
biclique line graph & $L_G$ & edges & two edges in a common biclique\\
clique graph & $K(G)$ & cliques & two intersecting cliques\\
clique hypergraph & ${\cal K}(G)$ & vertices & cliques\\
edge-biclique hypergraph & ${\cal EB}(G)$ & edges & edge-sets of bicliques
\end{tabular}
\end{center}
We also refer the reader to Figure \ref{fig:5} for an illustration of these concepts.

We say that a hypergraph ${\cal H}=(V,{\cal E})$ is {\em reduced} if there are
no hyperedges $X,X'\in{\cal E}$ with $X\subsetneqq X'$.  In other words, a
hypergraph ${\cal H}$ is reduced if every hyperedge of ${\cal H}$ is
inclusion-wise maximal among the hyperedges of ${\cal H}$. 
If ${\cal H}$ is not reduced, then the \underline{\em reduction} of ${\cal H}$
is the partial hypergraph of ${\cal H}$ containing only the inclusion-wise
maximal hyperedges of ${\cal E}$. Note that the reduction of ${\cal H}$ is
always a reduced hypergraph. Also, observe that ${\cal K}(G)$ and ${\cal EB}(G)$
are reduced hypergraphs by definition.

A hypergraph ${\cal H}=(V,{\cal E})$ is {\em Helly} if for every subcollection ${\cal
E}'\subseteq {\cal E}$ satisfying $X\cap X'\neq\emptyset$ for all $X,X'\in {\cal
E'}$, we have $\bigcap_{X\in {\cal E'}}X\neq\emptyset$. 
A hypergraph ${\cal H}$ is {\em conformal} if every clique of the 2-section of
${\cal H}$ is contained in a hyperedge of ${\cal H}$.  In particular, if ${\cal
H}$ is reduced, then ${\cal H}$ is conformal if and only if it is the clique
hypergraph of its 2-section.  Alternatively \cite{berge}, ${\cal H}$ is
conformal if and only if the dual of ${\cal H}$~is~Helly. 

We say that $H$ is a line graph, or a clique graph, or a biclique line graph if,
respectively, $H=L(G)$, or $H=K(G)$, or $H=L_G$, for some $G$.  Note that where
appropriate we shall refer to the vertices of $L_G$ and $K(G)$ as edges and
cliques, respectively, and refer to the hyperedges of ${\cal EB}(G)$ as
bicliques.

As usual, we shall denote by $V(G)$ and $E(G)$ the vertex set respectively the
edge set of a graph $G$. For hypergraphs, we shall not use special notation for
vertices and hyperedges for simplicity.

We emphasize that, in this paper, cliques and bicliques are always maximal, and
they are usually viewed as vertex sets, rather than subgraphs.  For any further
terminology, please consult \cite{berge,west}.

\section{The Conformal Property}

In this section, we characterize graphs $G$ whose edge-biclique hypergraph
${\cal EB}(G)$ is conformal.  We do this by studying the 2-section of ${\cal EB}(G)$.  Recall that
we use $L_G$ to denote the 2-section of ${\cal EB}(G)$ and call this graph the
biclique line graph of $G$.

We start with some useful observations about $L_G$. 
The following is a restatement of the definition.

\begin{proposition}\label{prop:2.1}
If $e=uv$ and $e'=u'v'$ are edges of $G$, then $e$ and $e'$ are adjacent in
$L_G$ if and only if either $u=u'$ and $vv'\not\in E(G)$, or $v=v'$ and
$uu'\not\in E(G)$, or $u,v,u',v'$ induces a four-cycle in $G$.
~\qed
\end{proposition}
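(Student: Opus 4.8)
The plan is to unwind the definitions. By construction $L_G$ is the 2-section of ${\cal EB}(G)$, so $e=uv$ and $e'=u'v'$ are adjacent in $L_G$ exactly when some biclique $B$ of $G$ --- say with bipartition into parts $X$ and $Y$ --- contains both $e$ and $e'$ in its edge-set; equivalently, $u,v$ lie in opposite parts of $B$ and $u',v'$ lie in opposite parts of $B$. So it suffices to show that such a $B$ exists if and only if one of the three listed configurations holds.

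For the ``only if'' direction I fix such a $B$ and do a case analysis on how $e$ and $e'$ meet. If they share an endpoint, say $u=u'$, then $u$ is in one part, which forces $v$ and $v'$ into the other part; that part is independent in $G$ since $B$ is induced, so $vv'\notin E(G)$, and the case $v=v'$ is symmetric. If $e$ and $e'$ are vertex-disjoint, then $u,v,u',v'$ are four distinct vertices, exactly two in each part of $B$, and because $B$ is induced the only edges of $G$ among the four are the (up to four) cross pairs, while the two within-part pairs are non-edges; since $uv$ and $u'v'$ are edges, this says exactly that $\{u,v,u',v'\}$ induces a $C_4$ in $G$.

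For the ``if'' direction I produce in each case a small induced complete bipartite subgraph $S$ of $G$ that has both $e$ and $e'$ among its edges, and then enlarge it to a biclique. If $u=u'$ and $vv'\notin E(G)$, take $S=G[\{u,v,v'\}]$, a $K_{1,2}$ with $u$ at the center; the case $v=v'$, $uu'\notin E(G)$ is symmetric; and if $\{u,v,u',v'\}$ induces a $C_4$, take $S$ to be that $C_4$, which is a $K_{2,2}$ having $e$ and $e'$ among its edges. In every case $S$ is connected with at least one edge, so it has a unique bipartition. Greedily add vertices of $G$ to $S$, keeping the induced subgraph complete bipartite, until reaching a maximal such subgraph $B$; then $B$ is a biclique, and since the bipartition of the connected subgraph $S$ must coincide (up to swapping the two sides) with the bipartition of $B$ restricted to $V(S)$, the two endpoints of $e$ stay in opposite parts of $B$, and similarly for $e'$. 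Hence $e,e'\in E(B)$, so $e$ and $e'$ are adjacent in $L_G$.

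The only delicate point --- the ``main obstacle'', such as it is --- is that last step: passing from the seed $S$ to a maximal induced complete bipartite supergraph must not pull the endpoints of $e$ (or of $e'$) onto the same side, and this is exactly guaranteed by the uniqueness of the bipartition of a connected complete bipartite graph with at least one edge. The rest is bookkeeping over the three ways in which two edges can sit inside a complete bipartite graph.
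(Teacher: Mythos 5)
Your proof is correct and matches the paper's treatment: the paper states this proposition with no argument at all, calling it ``a restatement of the definition,'' and your write-up is exactly the careful unwinding of that definition (two edges of $G$ are adjacent in $L_G$ iff some maximal induced complete bipartite subgraph contains both, plus the observation that any induced complete bipartite seed extends to a biclique). The one point you flag as delicate is in fact automatic --- once a biclique $B$ contains all the endpoints, any edge of $G$ between two vertices of $B$ is an edge of $G[B]$ and hence lies in the corresponding hyperedge, so the unique-bipartition argument, while valid, is not needed.
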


\begin{figure}[h!t]
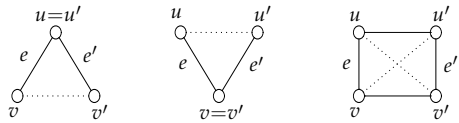

\centering
\vskip -2ex
$\xy/r2pc/:
(0,0.5)*[o][F]{\phantom{s}}="a1";
(-0.6,-0.5)*[o][F]{\phantom{s}}="a2";
(0.6,-0.5)*[o][F]{\phantom{s}}="a3";
"a1"+(0.05,0.3)*{_{u=u'}};
"a2"+(-0.05,-0.25)*{_v};
"a3"+(0.1,-0.25)*{_{v'}};
{\ar@{-}_{e} "a1";"a2"};
{\ar@{-}^{e'} "a1";"a3"};
{\ar@{.} "a2";"a3"};
\endxy$
\qquad
$\xy/r2pc/:
(0,-0.5)*[o][F]{\phantom{s}}="a1";
(-0.6,0.5)*[o][F]{\phantom{s}}="a2";
(0.6,0.5)*[o][F]{\phantom{s}}="a3";
"a1"+(0.05,-0.25)*{_{v=v'}};
"a2"+(-0.05,+0.3)*{_u};
"a3"+(0.1,+0.3)*{_{u'}};
{\ar@{-} "a1";"a2"};
{\ar@{-} "a1";"a3"};
"a1"+(0.6,0.5)*{_{e'}};
"a1"+(-0.55,0.5)*{_{e}};
{\ar@{.} "a2";"a3"};
\endxy$
\qquad
$\xy/r2pc/:
(-0.6,0.5)*[o][F]{\phantom{s}}="a1";
(-0.6,-0.5)*[o][F]{\phantom{s}}="a2";
(0.6,0.5)*[o][F]{\phantom{s}}="a3";
(0.6,-0.5)*[o][F]{\phantom{s}}="a4";
"a1"+(-0.05,0.25)*{_u};
"a2"+(-0.05,-0.25)*{_v};
"a3"+(0.1,0.3)*{_{u'}};
"a4"+(0.1,-0.25)*{_{v'}};
{\ar@{-}_{e} "a1";"a2"}; {\ar@{-}^{e'} "a3";"a4"};
{\ar@{-} "a1";"a3"}; {\ar@{.} "a2";"a3"};
{\ar@{.} "a1";"a4"}; {\ar@{-} "a2";"a4"};
\endxy$
\caption{Adjacent edges in biclique line graphs.\label{fig:2}}
\end{figure}

\noindent In the next lemma and subsequent statements, $\overline P_3$ denotes the complement of the path on 3
vertices.
\begin{lemma}\label{lem:4.1}
If $e_1=ab$ and $e_2=cd$ are edges of $G$ such that $G[a,b,c,d]$ contains a
triangle or an induced $\overline P_3$, then $e_1e_2$ is not an edge of $L_G$.
\end{lemma}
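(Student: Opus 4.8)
The plan is to derive the statement immediately from Proposition~\ref{prop:2.1} by a short case analysis, exploiting the fact that each of the three ways in which two edges can be adjacent in $L_G$ pins down the subgraph induced by their endpoints completely. I would argue by contraposition: assume $e_1e_2$ \emph{is} an edge of $L_G$, and show that $G[a,b,c,d]$ must then be isomorphic to $P_3$ or to $C_4$. Since every three vertices of $P_3$ span a $P_3$, and every three vertices of $C_4$ also span a $P_3$, in neither case do three of the vertices span a triangle or an induced $\overline P_3$; this contradicts the hypothesis. First I would dispose of the trivial coincidence $e_1=e_2$: then $\{a,b,c,d\}$ has only two elements, $G[a,b,c,d]$ is a single edge, the hypothesis already fails, and so we may assume $e_1\neq e_2$.

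Next I would split according to Proposition~\ref{prop:2.1}. If $e_1$ and $e_2$ share an endpoint, then after relabelling within each edge I may take the common endpoint to be $b=d$; the endpoints $a$ and $c$ are then distinct (otherwise $e_1=e_2$), and Proposition~\ref{prop:2.1} forces $ac\notin E(G)$. Since also $ab,cb\in E(G)$, the three-element set $\{a,b,c\}=\{a,b,c,d\}$ induces exactly a $P_3$. If instead $e_1$ and $e_2$ are vertex-disjoint, then $a,b,c,d$ are four distinct vertices and Proposition~\ref{prop:2.1} says they induce a four-cycle $C_4$. These are the only possibilities, since the four-cycle clause of Proposition~\ref{prop:2.1} involves four distinct vertices and so cannot apply when $e_1$ and $e_2$ meet.

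In either case $G[a,b,c,d]$ contains no triangle and no induced $\overline P_3$, contradicting the hypothesis; hence $e_1e_2\notin E(L_G)$. I do not expect any real obstacle here: the argument is purely a matter of organizing the cases supplied by Proposition~\ref{prop:2.1}. The one place to be careful is the bookkeeping of coincidences among $a,b,c,d$ — ruling out $e_1=e_2$, and in the shared-endpoint case checking that the two remaining endpoints are genuinely distinct — so that the relevant clause of Proposition~\ref{prop:2.1} determines $G[a,b,c,d]$ on the nose, leaving no room for an extra edge that could produce a triangle or an $\overline P_3$.
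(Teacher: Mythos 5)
Your proof is correct and is essentially the paper's argument run in the contrapositive direction: the paper cases on whether $G[a,b,c,d]$ contains a triangle or an induced $\overline P_3$ and derives a contradiction with Proposition~\ref{prop:2.1}, whereas you case on how $e_1$ and $e_2$ intersect and observe that Proposition~\ref{prop:2.1} forces $G[a,b,c,d]$ to be exactly a $P_3$ or a $C_4$, neither of which contains the forbidden configurations. The bookkeeping of coincidences among $a,b,c,d$ that you flag is handled correctly, so there is no gap.
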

\begin{proof}
Let $ab,cd$ be such edges, and let $H=G[a,b,c,d]$.  First, suppose that $H$
contains a triangle. Without loss of generality, let $a,b,c$ be a triangle of
$H$. If $\{a,b\}\cap \{c,d\}\neq\emptyset$, then $H$ itself is a triangle, and
hence, by Proposition~\ref{prop:2.1}, the edges $e_1=ab$ and $e_2=cd$ are not
adjacent in $L_G$.  So $\{a,b\}\cap \{c,d\}=\emptyset$, but then $H$ is not a
four-cycle implying again that $e_1e_2\not\in E(L_G)$.

Now, suppose that $H$ contains an induced $\overline P_3$.  Without loss of
generality, let $a,b,c$ induce a $\overline P_3$ in $H$ with $ac,bc\not\in
E(G)$. This yields $\{a,b\}\cap \{c,d\}=\emptyset$. Hence, if $e_1e_2\in
E(L_G)$,  it follows from Proposition~\ref{prop:2.1} that this can only be if
$a,b,c,d$ induces a four-cycle. But this contradicts $ac,bc\not\in E(G)$.
\end{proof}

Next, observe that the edge sets of bicliques of $G$ are complete subgraphs of
$L_G$.  In the following, we show that they are, in fact, cliques of $L_G$.

\begin{lemma}\label{thm:3}
The edge-biclique hypergraph of $G$ is a partial hypergraph of the clique
hypergraph of~$L_G$.
\end{lemma}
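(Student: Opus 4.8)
The plan is to show that each edge-set $B$ of a biclique is not merely a complete subgraph of $L_G$ but a maximal one, i.e.\ a clique of $L_G$; since $L_G$ is by definition the $2$-section of ${\cal EB}(G)$, this is exactly the statement that every hyperedge of ${\cal EB}(G)$ appears as a clique of $L_G$, which makes ${\cal EB}(G)$ a partial hypergraph of ${\cal K}(L_G)$. The completeness part is immediate from the definition of $L_G$ (any two edges lying in the common biclique $B$ are adjacent in $L_G$), so the real content is maximality.

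First I would fix a biclique $B$ of $G$ with bipartition classes $X$ and $Y$, so that $E(B) = \{xy : x\in X, y\in Y\}$ is the vertex set of a complete subgraph of $L_G$. Suppose for contradiction that some edge $e = cd$ of $G$, with $cd\notin E(B)$, is adjacent in $L_G$ to every edge of $B$. The goal is to derive that $B\cup\{c,d\}$ still induces a complete bipartite graph, contradicting the maximality of $B$ as a biclique. Here Lemma~\ref{lem:4.1} is the main tool: for each edge $e_1 = ab\in E(B)$ (so $a\in X$, $b\in Y$), adjacency of $e_1$ and $e$ in $L_G$ forces $G[a,b,c,d]$ to contain neither a triangle nor an induced $\overline P_3$. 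Applying this over all choices of $a\in X$ and $b\in Y$ should pin down exactly how $c$ and $d$ attach to $X\cup Y$.

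The key step is the case analysis on the neighbourhoods of $c$ and $d$ in $X\cup Y$. One first argues, using $|X|,|Y|\ge 2$ where needed (and handling small degenerate bicliques separately or noting $\overline{P_3}$-freeness already constrains them), that neither $c$ nor $d$ can be adjacent to a vertex of $X$ and a vertex of $Y$ simultaneously in a way that creates a triangle with the corresponding edge of $B$; likewise non-adjacency patterns are restricted by the forbidden induced $\overline P_3$. The expected conclusion is that, up to swapping $X$ and $Y$, the edge $e=cd$ attaches so that $c$ (and/or $d$) behaves like a new vertex that can be added to one side of the bipartition: e.g.\ $c$ is adjacent to all of $Y$ and to none of $X$, and $d$ is adjacent to all of $X$ and none of $Y$ — but then $X\cup\{d\}$ and $Y\cup\{c\}$ witness a complete bipartite subgraph of $G$ properly containing $B$ (the edge $cd$ itself bridges the two sides), contradicting maximality. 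If instead the analysis shows $c,d$ both fall on the same side, one reaches a similar contradiction with $B$'s maximality via $G[X\cup Y\cup\{c,d\}]$ still complete bipartite.

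I expect the main obstacle to be the bookkeeping in this case analysis: ruling out every "mixed" adjacency pattern of $c$ and $d$ toward $X$ and $Y$ cleanly, and in particular handling the possibility that $e$ shares an endpoint with $B$ (i.e.\ $c\in X\cup Y$), where Proposition~\ref{prop:2.1}'s first two cases rather than the four-cycle case apply. The degenerate bicliques (stars, where one side is a single vertex) may need a separate short argument, since there the "add to the other side" move looks different. Once the attachment pattern of $\{c,d\}$ is forced, producing the larger complete bipartite subgraph is routine, and the contradiction with maximality of $B$ closes the proof.
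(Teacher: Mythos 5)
Your proposal follows essentially the same route as the paper: assume some edge $e=cd\notin E(G[B])$ is adjacent in $L_G$ to every edge of the biclique $B$, use Lemma~\ref{lem:4.1} (together with Proposition~\ref{prop:2.1}) to pin down how $c$ and $d$ attach to $B$, and conclude that $G[B\cup\{c,d\}]$ is complete bipartite, contradicting maximality of $B$. The central case analysis that you describe only in outline (including the shared-endpoint and star cases you flag) is exactly what the paper carries out, and it does go through as you anticipate, so the plan is sound; only note the small slip that with $c$ adjacent to all of $Y$ and $d$ to all of $X$, the enlarged bipartition should be $X\cup\{c\}$ versus $Y\cup\{d\}$, not the other way around.
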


\begin{proof}
For the proof, we shall show that for every biclique of $G$, its edge set is a
clique in $L_G$.  Consider a biclique $B$ of $G$, and let $C$ denote the edges
of $G[B]$. We shall show that $C$ is a clique of $L_G$. 

Since all edges in the set $C$ belong to a complete bipartite subgraph of $G$,
the set $C$ induces a complete subgraph of $L_G$, as observed above the claim,
by the definition of $L_G$.  Suppose that $C$ is not a clique of $L_G$, that is, there exists an edge $uv=e\not\in C$ such
that $C\cup\{e\}$ is a complete subgraph of $L_G$.  We show that
$G[B\cup\{u,v\}]$ is a complete bipartite graph, which will contradict our
assumption that $B$ is a biclique of $G$.  If $G[B\cup\{u,v\}]$ is not a
complete bipartite graph, then it contains a triangle or an induced $\overline
P_3$ whose at least one vertex is $u$ or $v$.  In particular, if $u,v,a$ induces
in $G$ a triangle or a $\overline P_3$ for some $a\in B$, we let $b$ be any
vertex of $B$ adjacent to $a$ (possibly $b=u$ or $b=v$), and conclude that $ab$
and $uv$ are edges in $C\cup\{e\}$.  This, however, contradicts
Lemma~\ref{lem:4.1}, since then $G[a,b,u,v]$ contains a triangle or an induced
$\overline P_3$.  If $u,a,b$ or $v,a,b$ is a triangle or an induced $\overline
P_3$ in $G$ for $a,b\in B$ where $ab\in E(G)$, we again have edges $ab,uv$ in
$C\cup\{e\}$ contradicting Lemma~\ref{lem:4.1}.  So, we let $a,b$ be
non-adjacent vertices of $B$, and let $c$ be any vertex of $B$ adjacent to $a$
(and hence to $b$).  In particular, $ac$ and $bc$ are edges in $C$, and $u,v$
are not both in $\{a,b,c\}$, since $a,b,c\in B$ and $e\not\in C$.  If exactly
one of $u,v$ is in $\{a,b,c\}$, then we conclude that neither $u,a,b$ nor
$v,a,b$ induces a $\overline P_3$ in $G$, since otherwise we contradict
Lemma~\ref{lem:4.1} for the edges $ac,uv$ or $bc,uv$.  So,
$\{u,v\}\cap\{a,b,c\}=\emptyset$, and we conclude, by Proposition
\ref{prop:2.1}, that both $a,c,u,v$ and $b,c,u,v$ induce a four-cycle in $G$. In
other words, $vc\in E(G)$ if and only if $va,vb\not\in E(G)$ if and only if
$ua,ub\in E(G)$. Hence, both $u,a,b$ and $v,a,b$ do not induce a $\overline P_3$
in $G$. Consequently, $G[B\cup\{u,v\}]$ is a complete bipartite graph, a
contradiction.
\end{proof}

Now, assuming that $G$ contains no induced subgraph isomorphic to the triangular
prism (the graph in Figure \ref{fig:5}a) we show that there are no other cliques
in $L_G$ than the ones arising from bicliques of $G$. 

\begin{lemma}\label{thm:4}
If $G$ contains no induced subgraph isomorphic to the triangular prism, then
the edge-biclique hypergraph of $G$ is equal to the clique hypergraph of $L_G$.
\end{lemma}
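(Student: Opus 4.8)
The plan is to combine Lemma~\ref{thm:3} with a converse direction: Lemma~\ref{thm:3} already tells us that ${\cal EB}(G)$ is a partial hypergraph of ${\cal K}(L_G)$, so it remains to show that \emph{every} clique of $L_G$ arises as the edge set of a biclique of $G$. Equivalently, I want to show that if $C$ is a clique of $L_G$, then the set $B$ of endpoints of the edges in $C$ spans a complete bipartite subgraph of $G$ whose edge set is exactly $C$ (maximality will then be automatic, since a proper extension of the biclique would give a proper extension of the clique $C$, contradicting maximality of $C$).

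\medskip\noindent
First I would fix a clique $C$ of $L_G$ and let $B=\bigcup_{e\in C}e$ be the set of all vertices incident to edges of $C$. The goal is to prove that $G[B]$ is complete bipartite with edge set $C$. By Lemma~\ref{lem:4.1}, any two edges $e_1,e_2\in C$ have the property that $G[e_1\cup e_2]$ contains neither a triangle nor an induced $\overline{P}_3$; in other words, on any four vertices $a,b,c,d$ of $B$ that are the endpoints of two edges of $C$, the induced subgraph is $2K_2$ or $C_4$ (the only $P_4$-free, triangle-free, $\overline{P}_3$-free graphs on the relevant vertex sets). From this local structure I would like to extract a global bipartition of $B$. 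The natural approach: define a graph $G[B]$ and argue it is triangle-free and $\overline{P}_3$-free on \emph{all} of $B$, hence a disjoint union of complete bipartite graphs, and then use connectivity (via the clique $C$) to see it is a single complete bipartite graph. The triangle-free and $\overline P_3$-free claims need to be checked for vertices that may lie on the \emph{same} edge of $C$ as well as on different edges, and this is where the prism-freeness must enter.

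\medskip\noindent
The main obstacle — and the place the triangular-prism hypothesis is used — is ruling out a ``bad triple'' in $B$: three vertices $x,y,z\in B$ inducing a triangle or an induced $\overline P_3$ in $G$, where $x,y,z$ are not all covered by a single edge of $C$ and Lemma~\ref{lem:4.1} does not directly apply because no single pair of edges of $C$ has $\{x,y,z\}$ among its four endpoints. In that situation I would pick edges $e_x,e_y,e_z\in C$ with $x\in e_x$, $y\in e_y$, $z\in e_z$, write $e_x=xx'$, $e_y=yy'$, $e_z=zz'$, and examine the graph induced on $\{x,x',y,y',z,z'\}$ (with coincidences handled by a short case analysis). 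Pairwise adjacency in $L_G$ of $e_x,e_y,e_z$ forces, via Proposition~\ref{prop:2.1} and Lemma~\ref{lem:4.1}, a rather rigid pattern of edges and non-edges among these (up to) six vertices; together with the assumed bad triple on $\{x,y,z\}$, the resulting configuration contains an induced triangular prism (the two triangles being formed, roughly, by the $\overline P_3$/triangle on $x,y,z$ and a matching triangle on $x',y',z'$, joined by the three matching edges $e_x,e_y,e_z$). This contradiction shows no bad triple exists, so $G[B]$ is triangle-free and $\overline P_3$-free, hence a disjoint union of complete bipartite graphs.

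\medskip\noindent
To finish, I would observe that $C$, being a clique of $L_G$, is in particular a connected subgraph of $L_G$, so the edges of $C$ all lie in one connected component of $G[B]$; since that component is complete bipartite, $G[B]$ itself is a single complete bipartite graph, and by definition of $B$ its edge set contains $C$. Conversely every edge of $G[B]$ joins two endpoints of edges of $C$, so by Lemma~\ref{lem:4.1} (applied in the contrapositive: adjacent endpoints forbid the ``non-edge of $L_G$'' configurations) together with Proposition~\ref{prop:2.1}, each edge of $G[B]$ is adjacent in $L_G$ to every edge of $C$, whence it lies in $C$ by maximality of the clique $C$. Thus $C$ is exactly the edge set of the biclique $B$ (maximality of $B$ as a biclique follows because any strictly larger complete bipartite subgraph would, by Lemma~\ref{thm:3}, give a strictly larger clique of $L_G$ containing $C$). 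Combined with Lemma~\ref{thm:3}, this proves ${\cal EB}(G)={\cal K}(L_G)$.
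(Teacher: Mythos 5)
Your plan is sound and would go through, but it takes a genuinely different route from the paper's proof, so let me compare. Both arguments reduce to showing that for a clique $C$ of $L_G$, the vertex set $B$ covered by $C$ induces a complete bipartite graph whose edge set is exactly $C$; the difference is the order of the two halves and where prism-freeness enters. The paper first proves $E(G[B])=C$: assuming an edge $uv$ of $G[B]$ lies outside $C$, it produces a witness edge $e'\in C$ non-adjacent to $uv$ in $L_G$ and runs a case analysis on how $e'$ meets the four-cycle $a,b,u,v$, and the prism is found there; complete bipartiteness of $G[B]$ is then easy. You instead prove complete bipartiteness first, by showing that a triangle or induced $\overline P_3$ on $x,y,z\in B$, together with three covering edges $e_x,e_y,e_z\in C$, yields either an immediate contradiction with Proposition~\ref{prop:2.1} (in the degenerate cases where covering edges share endpoints or lie inside $\{x,y,z\}$) or an induced prism on the six endpoints; then $E(G[B])=C$ follows cheaply from maximality of $C$, since any two edges of a complete bipartite induced subgraph are adjacent in $L_G$. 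I checked your six-vertex configuration and it does work: in the $\overline P_3$ case the prism's two triangles are $\{x,y,z'\}$ and $\{x',y',z\}$ rather than $\{x,y,z\}$ and $\{x',y',z'\}$, consistent with your ``roughly''. Two small blemishes, neither of which affects the argument: your parenthetical claim that two edges of $C$ span a $2K_2$ or a $C_4$ is off, since $2K_2$ contains an induced $\overline P_3$ and is excluded by Proposition~\ref{prop:2.1}, so disjoint edges of $C$ always span an induced $C_4$; and the appeal to connectivity of $C$ at the end is unnecessary, because a graph with at least one edge and no induced $\overline P_3$ is automatically connected (indeed already complete bipartite once triangles are also excluded).
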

\begin{proof}
Assume that $G$ contains no induced subgraph isomorphic to the triangular prism.
By Lemma~\ref{thm:3} it remains to prove that every clique of $L_G$ is the
set of edges of some biclique of $G$.  Consider a clique $C$ of $L_G$, and let $B$ denote the
vertices of $G$ incident to the edges in the set $C$. We show that $B$ is a
biclique of $G$, and $C$ is precisely the set of edges of $G[B]$ which will
prove the claim.

First, we show that the set of edges of $G[B]$ is precisely $C$. Suppose
otherwise, and let $e=uv$ be an edge of $G[B]$ that is not in $C$. Since $u,v\in
B$, we have, by the definition of $B$, edges $au=e^*\in C$ and $bv=e^{**}\in C$.
Clearly, $a\neq v$ and $b\neq u$, since $e\not\in C$. Also, $a\neq b$, because
otherwise $G[a,b,u,v]$ contains a triangle contradicting Lemma \ref{lem:4.1} for
$e^*$ and $e^{**}$ which are adjacent in $L_G$.  Hence, we conclude that the
vertices $a,b,u,v$ induce a four-cycle.  Now, recall that $C$ is a clique of
$L_G$, that is, a maximal complete subgraph of $L_G$. So, since $e\not\in C$,
there must exist an edge $xy=e'\in C$ such that $e$ and $e'$ are not adjacent in
$L_G$. In particular, $e'$ must be adjacent to both $e^*$ and $e^{**}$ in $L_G$.

There are three possibilities.\smallskip

\noindent{\bf Case 1:} $\{x,y\}\cap \{a,b,u,v\}=\emptyset$. Since $e^*$ and $e'$
are adjacent in $L_G$, the vertices $a,u,x,y$ induce a four-cycle in $G$.
Without loss of generality, we may assume that $ux,ay\in E(G)$ and $uy,ax\not\in
E(G)$. Suppose that $vx\in E(G)$. Then it follows that $yb\in E(G)$ and
$xb,yv\not\in E(G)$, since the vertices $b,v,x,y$ induce a four-cycle in $G$.
Thus the vertices
$a,b,u,v,x,y$ induce the triangular prism, a contradiction. Hence, $vx\not\in
E(G)$ and it follows that $vy\not\in E(G)$, since otherwise $u,v,y,x$ induce a
four-cycle in $G$ contradicting the fact that $e$ and $e'$ are not adjacent in
$L_G$. In particular, $G[b,v,x,y]$ contains an induced $\overline{P}_3$. But
then Lemma~\ref{lem:4.1} implies that $e'$ and $e^{**}$ are not adjacent in
$L_G$, a contradiction.\smallskip

\noindent{\bf Case 2:} $y\in \{a,b,u,v\}$ and $x\not\in\{a,b,u,v\}$. First,
suppose that $u=y$. Since $e'$ is adjacent to $e^*$ but not to $e$ in $L_G$, we
have that $ax\not\in E(G)$ and $vx\in E(G)$. Thus $G[b,v,x,y]$ contains a
triangle which, by Lemma~\ref{lem:4.1}, contradicts the fact that $e'$ and
$e^{**}$ are adjacent in $L_G$. Hence, $u\neq y$ and by symmetry, $v\neq y$.
Now, suppose that $a=y$. Again, $xu,xv\not\in E(G)$ since $e'$ is adjacent to
$e^*$ and not adjacent to $e$ in $L_G$, respectively. Thus $G[b,v,x,y]$
contains an induced $\overline{P}_3$ which, again by Lemma \ref{lem:4.1}, leads
to a contradiction. So, $a\neq y$ and by symmetry, $b\neq y$, contradicting
$y\in\{a,b,u,v\}$.\smallskip

\noindent{\bf Case 3:} $x,y\in\{a,b,u,v\}$. This case again leads to a
contradiction, since it is easy to see that all edges of $G[a,b,u,v]$ are
adjacent to $e$ in $L_G$.\medskip

This proves that $C$ is precisely the set of edges of $G[B]$.  Next, we show
that $G[B]$ is a complete bipartite graph.   Suppose otherwise, that is, $G[B]$
contains a triangle or an induced $\overline P_3$.  If $G[B]$ contains a
triangle, then the edges of this triangle are in $C$ but at the same time they
are pairwise not adjacent in $L_G$, contradicting the fact that $C$ is a clique
of $L_G$.  Therefore, there must be vertices $u,v,w$ inducing a $\overline P_3$
in $G[B]$ where $uv\in E(G)$ and $uw,vw\not\in E(G)$.  In particular, $uv$ is an
edge in $C$, and since $w\in B$, there exists, by the definition of $B$, an edge
$zw=e'\in C$. We conclude that $e$ and $e'$ are adjacent in $L_G$, since $C$ is
a clique of $L_G$, which contradicts Lemma~\ref{lem:4.1}, because $u,v,w$ is
an induced $\overline P_3$ in $G[u,v,z,w]$.

We conclude that $G[B]$ is a complete bipartite graph, and hence, there
exists a biclique $B'$ of $G$ such that $B'\supseteq B$.  However, if $C'$ is
the set of edges of $G[B']$, then $C'$ is a complete subgraph of $L_G$ and we
have $C'\supseteq C$. So, we conclude $C'=C$ which yields $B'=B$, and hence, $B$
is a biclique of $G$.

That concludes the proof.
\end{proof}

Note that the assumption in the above theorem cannot be removed since if $G$ is
the triangular prism, the bicliques of $G$ and the cliques of $L_G$ are
different (see Figure \ref{fig:5}). In fact, a stronger statement is true as it
turns out that any graph with an induced triangular prism similarly fails.

We prove this in the following theorem.

\begin{theorem}
For every graph $G$, the edge-biclique hypergraph of $G$ is equal to the clique
hypergraph of $L_G$ if and only if $G$ contains no induced subgraph isomorphic
to the triangular prism.
\end{theorem}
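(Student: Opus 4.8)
The plan is to treat the two implications separately. The implication ``$G$ has no induced triangular prism $\Rightarrow$ ${\cal EB}(G)={\cal K}(L_G)$'' is exactly Lemma~\ref{thm:4}, so there is nothing new to do in that direction. For the converse I would prove the contrapositive: assuming $G$ contains an induced triangular prism, I would produce a single clique of $L_G$ that is not the edge-set of any biclique of $G$. Since Lemma~\ref{thm:3} already guarantees that every biclique edge-set is a clique of $L_G$, exhibiting one extra clique suffices to conclude ${\cal EB}(G)\neq{\cal K}(L_G)$.

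Concretely, fix an induced triangular prism of $G$ on vertices $a,b,c,d,e,f$ with triangles $abc$, $def$ and matching edges $ad,be,cf$. The first step is to check that these three matching edges are pairwise adjacent in $L_G$. For $ad$ and $be$: the vertices $a,b,d,e$ induce a four-cycle in $G$ with edges $ab,be,ed,da$ (here one uses that the prism is an \emph{induced} subgraph, so $ae,bd\notin E(G)$), and this four-cycle extends to some biclique $B$ of $G$. Since $G[B]$ is complete bipartite and $a\not\sim e$, $b\not\sim d$ in $G$, the bipartition of $G[B]$ keeps $a$ apart from $d$ and $b$ apart from $e$, so both $ad$ and $be$ are edges of $G[B]$; hence $ad$ and $be$ lie in a common biclique, i.e.\ $ad\sim be$ in $L_G$. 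Applying the identical argument to the four-cycles on $a,c,d,f$ and on $b,c,e,f$ gives $ad\sim cf$ and $be\sim cf$. Therefore $\{ad,be,cf\}$ is a complete subgraph of $L_G$, and extends to some clique $C$ of $L_G$.

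The second step is to see that $C$ cannot be a biclique edge-set. If $C$ were the set of edges of a biclique $B^\ast$ of $G$, then since $ad,be,cf\in C$ are edges of $G[B^\ast]$, all six prism vertices lie in $B^\ast$; but then $G[\{a,b,c\}]$ is an induced subgraph of $G[B^\ast]$, which is impossible since $G[B^\ast]$ is complete bipartite (hence triangle-free) while $abc$ is a triangle. So $C\in{\cal K}(L_G)\setminus{\cal EB}(G)$, which together with Lemma~\ref{thm:4} for the other direction proves the theorem.

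I do not anticipate a genuine obstacle. The only points needing mild care are that the three ``square'' vertex subsets of the prism really do induce four-cycles in $G$ (which is where one uses that the prism occurs as an induced subgraph) and that enlarging such a four-cycle to a biclique does not disturb which pairs of opposite vertices end up on opposite sides; both are immediate from the definitions, so once the complete subgraph $\{ad,be,cf\}$ of $L_G$ is spotted the rest is bookkeeping.
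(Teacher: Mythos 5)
Your proposal is correct and follows essentially the same route as the paper: both directions are handled identically, with the forward direction exhibiting the triangle $\{ad,be,cf\}$ in $L_G$, extending it to a clique, and noting that no biclique can contain the triangle $a,b,c$. The only difference is that you spell out (via the induced four-cycles of the prism) why those three edges are pairwise adjacent in $L_G$, a step the paper asserts without detail.
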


\begin{proof}
The backward direction is proved as Lemma \ref{thm:4}.  For the forward
direction, let $G$ be a graph containing an induced triangular prism on vertices
$a,b,c,d,e,f$ as depicted in Figure \ref{fig:5}. Consider the edges $e_1=ad$,
$e_2=be$, and $e_3=cf$. Note that $e_1,e_2,e_3$ form a triangle in $L_G$. So,
there is a clique $C$ in $L_G$ containing $e_1,e_2,e_3$.  However, we observe
that there is no biclique $B$ in $G$ where $e_1,e_2,e_3$ are edges of
$G[B]$. Indeed, any such $B$ would contain the vertices $a,b,c$ which induce a
triangle in $G$, and hence in $G[B]$, which is impossible. Thus we conclude that
$C$ is a hyperedge of the clique hypergraph of $L_G$ but not a hyperedge of the
edge-biclique hypergraph of $G$.  So, the two hypergraphs are not equal.
\end{proof}

Finally, we notice that ${\cal EB}(G)$ is a reduced hypergraph.  Thus the above
theorem also yields the following corollary which characterizes those graphs $G$
whose edge-biclique hypergraph is conformal.

\begin{corollary}
The edge-biclique hypergraph of a graph $G$ is conformal if and
only if $G$ contains no induced subgraph isomorphic to the triangular
prism.
\end{corollary}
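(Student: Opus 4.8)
The plan is to deduce this corollary immediately from the theorem proved just above, using only the definitional characterization of conformality for reduced hypergraphs. First I would recall from Section~2 that a hypergraph ${\cal H}$ is conformal precisely when every clique of its 2-section lies inside some hyperedge, and that for a \emph{reduced} hypergraph this is equivalent to the statement that ${\cal H}$ equals the clique hypergraph of its 2-section; this is exactly the remark recorded just after the definition of conformal, so I would cite it rather than reprove it.

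The second step is to check that ${\cal EB}(G)$ is reduced. This is immediate: the hyperedges of ${\cal EB}(G)$ are the edge-sets of the bicliques of $G$, and since bicliques are inclusion-wise maximal induced complete bipartite subgraphs, one such edge-set cannot be properly contained in another (a proper containment of the edge-sets of two induced complete bipartite subgraphs forces a proper containment of their vertex sets, contradicting maximality). The excerpt already records that ${\cal EB}(G)$ is reduced, so this step is essentially free. Because $L_G$ is by definition the 2-section of ${\cal EB}(G)$, combining these two observations yields the equivalence: ${\cal EB}(G)$ is conformal if and only if ${\cal EB}(G)$ equals the clique hypergraph of $L_G$.

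Finally, I would invoke the theorem above, which asserts that ${\cal EB}(G)$ equals the clique hypergraph of $L_G$ if and only if $G$ contains no induced subgraph isomorphic to the triangular prism, and chain the two equivalences to obtain the corollary. I do not expect any genuine obstacle here, since all the substantive work is already done in Lemmas~\ref{thm:3} and~\ref{thm:4} and in the theorem; the only point requiring a moment's care is confirming that the hypothesis of the reduced-hypergraph characterization of conformality genuinely applies, which it does because ${\cal EB}(G)$ is manifestly reduced.
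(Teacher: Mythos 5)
Your proposal is correct and matches the paper's own derivation exactly: the paper notes that ${\cal EB}(G)$ is reduced, so conformality is equivalent to being the clique hypergraph of its 2-section $L_G$, and then applies the preceding theorem. No differences worth noting.
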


\section{The Helly Property}

We now turn to investigating graphs whose edge sets of bicliques satisfy the
Helly property. 
In particular, we show that the edge-biclique hypergraph of $G$ is Helly if and
only if the clique hypergraph of $L_G$ is Helly.
We start with the following observation.

\begin{lemma}\label{lem:6.1}
If the edge-biclique hypergraph of $G$ is Helly, then $G$ does not contain the
triangular prism as an induced subgraph.
\end{lemma}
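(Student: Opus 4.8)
The plan is to prove the contrapositive: assuming $G$ contains an induced triangular prism, I will exhibit a family of bicliques of $G$ whose edge-sets pairwise intersect but have no common edge, so that ${\cal EB}(G)$ is not Helly. Label the prism vertices $a,b,c,d,e,f$ as in Figure~\ref{fig:5}a, so that $abc$ and $def$ are triangles and $ad,be,cf$ are the three ``rungs''. The three ``square faces'' $G[\{a,b,d,e\}]$, $G[\{b,c,e,f\}]$, $G[\{a,c,d,f\}]$ are then induced $4$-cycles, hence complete bipartite, so each extends to a biclique of $G$; fix bicliques $B_1\supseteq\{a,b,d,e\}$, $B_2\supseteq\{b,c,e,f\}$, $B_3\supseteq\{a,c,d,f\}$. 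Since $b,e\in B_1\cap B_2$ and $be\in E(G)$, the edge $be$ lies in $E(G[B_1])\cap E(G[B_2])$; likewise $ad\in E(G[B_1])\cap E(G[B_3])$ and $cf\in E(G[B_2])\cap E(G[B_3])$, so the three edge-sets pairwise intersect.

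The crux --- and the step I expect to be the main obstacle, because the bicliques $B_i$ need not be contained in the prism and can be arbitrarily large --- is to show $B_1\cap B_2\cap B_3=\emptyset$; this immediately gives $E(G[B_1])\cap E(G[B_2])\cap E(G[B_3])=\emptyset$. For this I would exploit that each $G[B_i]$ is complete bipartite and that the $4$-cycle it contains has a \emph{unique} bipartition, which must therefore agree with the bipartition of $G[B_i]$. For $B_1$, with the two sides containing $\{a,e\}$ and $\{b,d\}$ respectively, any vertex $x\in B_1$ on the $\{a,e\}$-side is non-adjacent to $a$ and adjacent to $b$, while one on the $\{b,d\}$-side is the reverse; hence $xa\in E(G)$ if and only if $xb\notin E(G)$. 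Reading the analogous equivalence off $B_2$ (using $b$ and $c$) and off $B_3$ (using $a$ and $c$), a supposed $x\in B_1\cap B_2\cap B_3$ would satisfy $xa\in E(G)\Leftrightarrow xb\notin E(G)\Leftrightarrow xc\in E(G)\Leftrightarrow xa\notin E(G)$, which is absurd; so no such $x$ exists.

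It remains to note that the three hyperedges are genuinely distinct: for instance $c\notin B_1$ (else $G[B_1]$ would be a complete bipartite graph containing the triangle $abc$), whereas $c\in B_2$, so $B_1\ne B_2$, and symmetrically all three $B_i$ differ, so their edge-sets (each complete bipartite with an edge, hence determined by it) are three distinct hyperedges. Thus $\{E(G[B_1]),E(G[B_2]),E(G[B_3])\}$ is a subcollection of hyperedges of ${\cal EB}(G)$ that is pairwise intersecting but has empty total intersection, i.e., ${\cal EB}(G)$ is not Helly. The argument is uniform in $G$: when $G$ is the prism itself the $B_i$ are exactly the square faces, and the same bipartition bookkeeping handles larger $G$, so no separate base case is needed.
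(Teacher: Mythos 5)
Your proposal is correct and follows essentially the same route as the paper: both take the three bicliques extending the quadrilateral faces $\{a,b,d,e\}$, $\{b,c,e,f\}$, $\{a,c,d,f\}$, observe they pairwise share the rung edges, and then use the forced bipartitions of the $G[B_i]$ to rule out a common element. Your contradiction step is in fact slightly stronger and cleaner than the paper's --- you show $B_1\cap B_2\cap B_3=\emptyset$ via the cyclic chain $xa\in E(G)\Leftrightarrow xb\notin E(G)\Leftrightarrow xc\in E(G)\Leftrightarrow xa\notin E(G)$, whereas the paper only excludes a common \emph{edge} $uv$ by locating a triangle or $\overline{P}_3$ inside one of the bicliques.
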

\begin{proof}
Let $G$ be a graph such that ${\cal EB}(G)$ is Helly. Suppose that $G$ contains
induced triangular prism on vertices $a,b,c,d,e,f$ as shown in Figure
\ref{fig:5}a. Let $B_1$ be the biclique of $G$ that contains $\{a,b,d,e\}$, let
$B_2$ be the biclique of $G$ that contains $\{b,c,e,f\}$, and let $B_3$ be the
biclique of $G$ that contains $\{a,c,d,f\}$.  Clearly, $c,f\not\in B_1$,
$a,d\not\in B_2$, and $b,e\not\in B_3$. Since ${\cal EB}(G)$ is Helly and the
bicliques $B_1,B_2,B_3$ pairwise intersect in an edge, there must exist an edge
$e=uv$ with $u,v\in B_1\cap B_2\cap B_3$. Clearly, $u\neq a$ since $a\not\in
B_2$.  Similarly, $u\not\in \{a,b,c,d,e,f\}$ and by symmetry we conclude that
$\{u,v\}\cap\{a,b,c,d,e,f\}=\emptyset$. Now, we observe that $u$ is adjacent to
exactly one of $\{a,b\}$, since otherwise $G[B_1]$ contains a triangle or an
induced $\overline{P}_3$, and thus $B_1$ is not a biclique.
Without loss of generality, suppose that $ua\in E(G)$ and $ub\not\in E(G)$. This
implies that $vb\in E(G)$ and $va\not\in E(G)$.  Therefore, $vc\not\in E(G)$,
since otherwise $G[B_2]$ contains a triangle. Thus the vertices $v,a,c$
induce a $\overline{P}_3$ in $G[B_3]$, a contradiction.~\end{proof}

\begin{theorem}\label{thm:5}
The edge-biclique hypergraph of $G$ is Helly if and only if the clique hypergraph
of $L_G$ is Helly.
\end{theorem}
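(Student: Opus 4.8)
The plan is to obtain both implications as essentially immediate consequences of the results of \S3 together with Lemma~\ref{lem:6.1}, so that no new argument involving four-cycles or induced $\overline{P}_3$'s is needed.

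First I would handle the backward direction: assume $\mathcal{K}(L_G)$ is Helly and show $\mathcal{EB}(G)$ is Helly. By Lemma~\ref{thm:3}, $\mathcal{EB}(G)$ is a partial hypergraph of $\mathcal{K}(L_G)$ --- the same vertex set $E(G)$, and every hyperedge of $\mathcal{EB}(G)$ is a hyperedge of $\mathcal{K}(L_G)$. The key (elementary) observation is that the Helly property is inherited by partial hypergraphs: any pairwise-intersecting subcollection of the hyperedges of $\mathcal{EB}(G)$ is also a pairwise-intersecting subcollection of the hyperedges of $\mathcal{K}(L_G)$, hence has a common vertex by the Helly property of the latter. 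This yields the backward implication.

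For the forward direction, assume $\mathcal{EB}(G)$ is Helly. Then Lemma~\ref{lem:6.1} tells us that $G$ has no induced triangular prism, and so Lemma~\ref{thm:4} applies and gives $\mathcal{EB}(G) = \mathcal{K}(L_G)$. Since the two hypergraphs are literally equal, $\mathcal{K}(L_G)$ inherits the Helly property of $\mathcal{EB}(G)$, and we are done.

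As for obstacles, there is really none of substance here --- the theorem is a clean corollary of Lemmas~\ref{thm:3}, \ref{thm:4}, and \ref{lem:6.1}. The only thing that needs to be stated explicitly (and it is trivial from the definition) is that passing to a partial hypergraph preserves the Helly property; all of the genuine casework has already been absorbed into the three cited lemmas, so the proof is just a matter of assembling them in the right order. One should perhaps remark in passing that this is the step of the paper where the conformality result of \S3 pays off: it is precisely the identification $\mathcal{EB}(G)=\mathcal{K}(L_G)$ in the prism-free case that lets us replace the Helly property of the somewhat opaque hypergraph $\mathcal{EB}(G)$ by the Helly property of the clique hypergraph of a concrete graph $L_G$, which in turn is what makes polynomial-time testing possible.
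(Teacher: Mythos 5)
Your proof is correct and follows essentially the same route as the paper's: the backward direction uses Lemma~\ref{thm:3} (the edge-sets of bicliques form a subfamily of the cliques of $L_G$, so the Helly property passes down), and the forward direction chains Lemma~\ref{lem:6.1} with Lemma~\ref{thm:4} to get the equality $\mathcal{EB}(G)=\mathcal{K}(L_G)$. Nothing is missing.
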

\begin{proof}
By Lemma \ref{thm:3}, the edge sets of bicliques of $G$ are the cliques of
$L_G$.  Hence, if the cliques of $L_G$ satisfy the Helly property, then the edge
sets of bicliques of $G$ must satisfy the Helly property.  Conversely, if ${\cal
EB}(G)$ is Helly, we conclude, by Lemma \ref{lem:6.1},  that $G$ contains no
induced triangular prism.  Hence, by Lemma \ref{thm:4}, the cliques of $L_G$ are
the edge sets of bicliques of $G$. So, if the edge sets of bicliques of $G$
satisfy the Helly property, then the cliques of $L_G$ must satisfy the Helly
property.
\end{proof}

\begin{corollary}
There is a polynomial time algorithm for the recognition of
graphs whose edge-biclique hypergraph is Helly.
\end{corollary}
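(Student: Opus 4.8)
The plan is to combine Theorem~\ref{thm:5} with the (classical) fact that clique-Helly graphs can be recognized in polynomial time. By Theorem~\ref{thm:5}, the hypergraph ${\cal EB}(G)$ is Helly if and only if the clique hypergraph ${\cal K}(L_G)$ is Helly, i.e.\ if and only if $L_G$ is a clique-Helly graph. So the algorithm naturally splits into two phases: first construct the graph $L_G$ from $G$, and then decide whether $L_G$ is clique-Helly.

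For the first phase I would build $L_G$ directly from Proposition~\ref{prop:2.1}. If $G$ has $n$ vertices and $m$ edges, then $L_G$ has $m$ vertices, and for each (unordered) pair of edges $e=uv$, $e'=u'v'$ of $G$ one tests in constant time, using the adjacency matrix of $G$, whether $e$ and $e'$ satisfy one of the three conditions of Proposition~\ref{prop:2.1}: a shared endpoint with the two remaining endpoints nonadjacent, or the four endpoints inducing a $C_4$. This yields $L_G$ in $O(m^2)=O(n^4)$ time.

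The hard part is the second phase, and the main obstacle to watch for is that the naive approach — enumerate all cliques of $L_G$ and test the Helly property on the resulting hypergraph ${\cal K}(L_G)$ directly — does not run in polynomial time, since a graph (in particular $L_G$) may have exponentially many cliques, so ${\cal K}(L_G)$ cannot in general even be written down in polynomial time. The way around this is to invoke the known structural characterization of clique-Helly graphs in terms of extended triangles (Szwarcfiter): a graph $H$ is clique-Helly if and only if for every triangle $T$ of $H$, the subgraph induced by all vertices adjacent to at least two vertices of $T$ contains a vertex adjacent to all the others. This condition is verifiable in polynomial time by iterating over all triangles of $H$ and, for each, over the candidate universal vertices, without ever enumerating the cliques of $H$.

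Putting the two phases together gives the claimed algorithm: construct $L_G$ in $O(n^4)$ time, then run the clique-Helly test on $L_G$, which takes time polynomial in $|V(L_G)|+|E(L_G)|=O(n^4)$, hence polynomial in $n$. By Theorem~\ref{thm:5}, the output of the clique-Helly test on $L_G$ is exactly the answer to whether ${\cal EB}(G)$ is Helly, so the whole procedure is a correct polynomial time recognition algorithm. I do not expect any additional subtlety beyond the need to use the extended-triangle test rather than a brute-force clique enumeration.
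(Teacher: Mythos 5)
Your proposal is correct and follows the same route as the paper: reduce via Theorem~\ref{thm:5} to testing whether $L_G$ is clique-Helly, construct $L_G$ in polynomial time, and invoke the known polynomial-time clique-Helly recognition (the paper cites Lin and Szwarcfiter, whose algorithm is based on the same extended-triangle criterion you describe). Your remark about avoiding brute-force clique enumeration matches the paper's own caveat that Berge's Helly test cannot be applied directly because there may be exponentially many bicliques.
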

\begin{proof}
Clearly, the graph $L_G$ can be constructed in polynomial time.
By~\cite{clique-helly}, graphs whose clique hypergraph is Helly can be recognized
in polynomial time. This with Theorem \ref{thm:5} implies the claim. 
\end{proof}

To be more precise, the complexity of the algorithm is $O(|E(G)|^4)$.  This follows
from $O(|E(G)|^2)$ complexity \cite{clique-helly} of recognizing graphs whose clique hypergraph is
Helly. Since we apply this to the graph $L_G$, the total
complexity is $O(|E(L_G)^2|)=O(|E(G)|^4)$. For this note that $L_G$ can have
$O(|E(G)|^2)$ edges, and this is tight, for example, if $G$ is a complete
bipartite graph.  Finally, the construction of the biclique line graph $L_G$
from $G$ can be realized in time $O(|E(G)|^2)$ by a straightforward
implementation. 

We remark that Berge described in \cite{berge} a polynomial time condition for a
family of sets to be Helly.  However, we cannot apply this condition
directly, as a graph can have exponentially many bicliques.

\section{The Hereditary Helly Property}

In this section, we look at a hereditary version of the Helly property for
edge-biclique hypergraphs. This is in a direct analogy with similar classes of
graphs based on the Helly property (e.g., clique-Helly, disk-Helly) whose
corresponding hereditary classes have been considered in the literature (cf.
\cite{ref7}).

We say that a hypergraph ${\cal H}$ is \underline{\em hereditary Helly} if the
reduction of every induced subhypergraph of ${\cal H}$ is Helly.  We require
only reductions of induced subhypergraphs to be Helly so that we obtain a more
general notion also suitable for derived hypergraphs (see below).\smallskip

We study graphs $G$ for which the edge-biclique hypergraph
${\cal EB}(G)$ is hereditary Helly. It can be seen from the definition that
${\cal EB}(G)$ is hereditary Helly if and only if for every induced subgraph
$H$ of $G$, the hypergraph ${\cal EB}(H)$ is Helly. Using this, we describe (in
Theorem \ref{thm:6}) a finite forbidden induced subgraph characterization of
graphs whose edge-biclique hypergraph is hereditary Helly.  \smallskip

{
A {\em $B$-template} is a graph $H$ that consists of a complete bipartite graph
$B$ and three additional vertices $x_1,x_2,x_3$ satisfying one of the following:
\begin{enumerate}[1.]
\item $V(B)=\{1,2,3,z\}$ and $E(B)=\{1z,2z,3z\}$ where for each
$i\in\{1,2,3\}$\smallskip

\begin{compactenum}[(a)]
\item $H[B\setminus\{i\}\cup \{x_i\}]$ is a complete bipartite graph,\smallskip
\item $H[B\cup \{x_i\}]$ is not a complete bipartite graph,
\end{compactenum}

\item $V(B)=\{1,1',2,2',$ $3,3'\}$ and $E(B) = \{11', 12', 13', 21', 22', 23',
31', 32', 33'\}$ where for each
$i\in\{1,2,3\}$\smallskip

\begin{compactenum}[(a)]
\item $H[B\setminus\{i,i'\}\cup \{x_i\}]$ is a complete bipartite graph,
\smallskip
\item $H[B\setminus\{i\}\cup \{x_i\}]$ and $H[B\setminus\{i'\}\cup \{x_i\}]$ are
not complete bipartite graphs.
\end{compactenum}

\end{enumerate}
}

\begin{figure}[h!t]
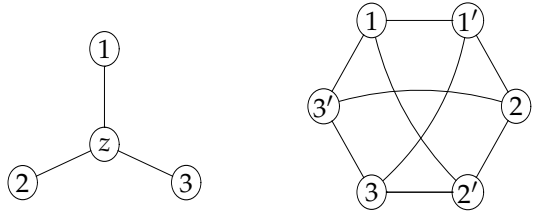

\centering
$\xy/r3pc/:
(0.15,-0.4)*+[o][F-]{2}="A2";
(1,0)*+[o][F-]{\phantom{2}}="Z";
"Z"*{z};
(1.85,-0.4)*+[o][F-]{3}="A3";
(1,1)*+[o][F-]{1}="A1";
{\ar@{-} "Z";"A1"};
{\ar@{-} "Z";"A2"};
{\ar@{-} "Z";"A3"};
\endxy$\qquad\qquad
$\xy/r3pc/:
(0,1.3)*+[o][F-]{1}="A1";
(1,1.3)*+[o][F-]{\phantom{S}}="A1'";
"A1'"*{1'};
(1.5,0.4)*+[o][F-]{2}="A2";
(1,-0.5)*+[o][F-]{\phantom{S}}="A2'";
"A2'"*{2'};
(0,-0.5)*+[o][F-]{3}="A3";
(-0.5,0.4)*+[o][F-]{\phantom{S}}="A3'";
"A3'"*{3'};
{\ar@{-} "A1";"A1'"};
{\ar@{-} "A1'";"A2"};
{\ar@{-}@/^/ "A1'";"A3"+(0.12,0.12)};
{\ar@{-}@/^/ "A2'";"A1"};
{\ar@{-} "A2'";"A2"};
{\ar@{-} "A2'";"A3"};
{\ar@{-}@/_/ "A2";"A3'"};
{\ar@{-} "A2'";"A3"};
{\ar@{-} "A3";"A3'"};
{\ar@{-} "A1";"A3'"};
\endxy$\bigskip
\caption{The graphs $B$ of a $B$-template.\label{fig:x1}}
\end{figure}

{
See Figure \ref{fig:x1} for the two cases of the graph $B$.
All possible $B$-templates are illustrated in Figure \ref{fig:x2}. 
For the proof of our characterization, we shall need the following useful lemma.
}

\begin{figure}
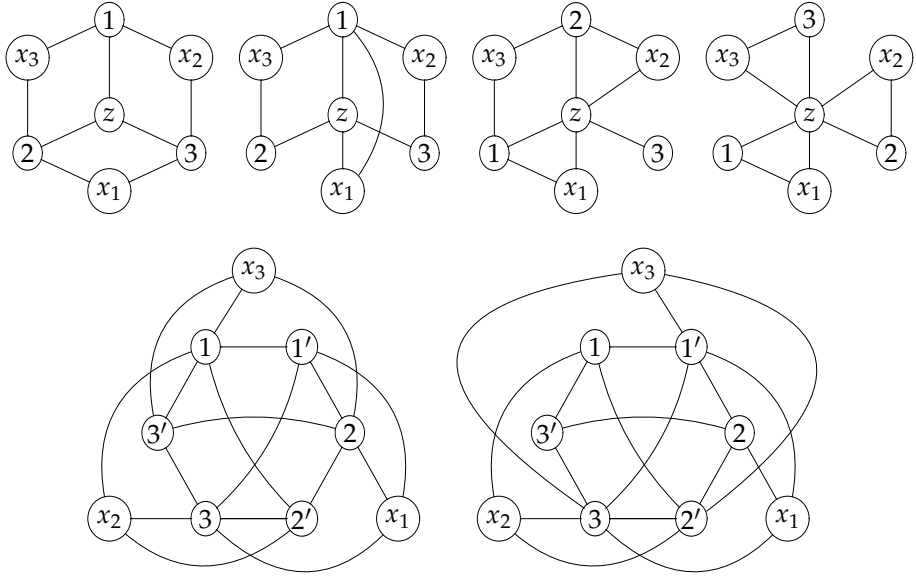

\centering
$\xy/r3pc/:
(0.15,-0.4)*+[o][F-]{2}="A2";
(1,0)*+[o][F-]{\phantom{2}}="Z";
(1.85,-0.4)*+[o][F-]{3}="A3";
(1,1)*+[o][F-]{1}="A1";
"Z"*{z};
(0.15,0.6)*++[o][F-]{\phantom{s}}="x3";
"x3"*{x_3};
(1.85,0.6)*++[o][F-]{\phantom{s}}="x2";
"x2"*{x_2};
(1,-0.8)*++[o][F-]{\phantom{s}}="x1";
"x1"*{x_1};
{\ar@{-} "Z";"A1"};
{\ar@{-} "Z";"A2"};
{\ar@{-} "Z";"A3"};
{\ar@{-} "A2";"x1"};
{\ar@{-} "A3";"x1"};
{\ar@{-} "A1";"x2"};
{\ar@{-} "A3";"x2"};
{\ar@{-} "A1";"x3"};
{\ar@{-} "A2";"x3"};
\endxy$\quad
$\xy/r3pc/:
(0.15,-0.4)*+[o][F-]{2}="A2";
(1,0)*+[o][F-]{\phantom{2}}="Z";
(1.85,-0.4)*+[o][F-]{3}="A3";
(1,1)*+[o][F-]{1}="A1";
"Z"*{z};
(0.15,0.6)*++[o][F-]{\phantom{s}}="x3";
"x3"*{x_3};
(1.85,0.6)*++[o][F-]{\phantom{s}}="x2";
"x2"*{x_2};
(1,-0.8)*++[o][F-]{\phantom{s}}="x1";
"x1"*{x_1};
{\ar@{-} "Z";"A1"};
{\ar@{-} "Z";"A2"};
{\ar@{-} "Z";"A3"};
{\ar@{-} "Z";"x1"};
{\ar@{-}@/^1.0pc/ "A1";"x1"+(0.17,0.17)};
{\ar@{-} "A1";"x2"};
{\ar@{-} "A3";"x2"};
{\ar@{-} "A1";"x3"};
{\ar@{-} "A2";"x3"};
\endxy$\quad
$\xy/r3pc/:
(0.15,-0.4)*+[o][F-]{1}="A1";
(1,0)*+[o][F-]{\phantom{2}}="Z";
(1.85,-0.4)*+[o][F-]{3}="A3";
(1,1)*+[o][F-]{2}="A2";
"Z"*{z};
(0.15,0.6)*++[o][F-]{\phantom{s}}="x3";
"x3"*{x_3};
(1.85,0.6)*++[o][F-]{\phantom{s}}="x2";
"x2"*{x_2};
(1,-0.8)*++[o][F-]{\phantom{s}}="x1";
"x1"*{x_1};
{\ar@{-} "Z";"A1"};
{\ar@{-} "Z";"A2"};
{\ar@{-} "Z";"A3"};
{\ar@{-} "Z";"x1"};
{\ar@{-} "A1";"x1"};
{\ar@{-} "Z";"x2"};
{\ar@{-} "A2";"x2"};
{\ar@{-} "A1";"x3"};
{\ar@{-} "A2";"x3"};
\endxy$\quad
$\xy/r3pc/:
(0.15,-0.4)*+[o][F-]{1}="A1";
(1,0)*+[o][F-]{\phantom{2}}="Z";
(1.85,-0.4)*+[o][F-]{2}="A2";
(1,1)*+[o][F-]{3}="A3";
"Z"*{z};
(0.15,0.6)*++[o][F-]{\phantom{s}}="x3";
"x3"*{x_3};
(1.85,0.6)*++[o][F-]{\phantom{s}}="x2";
"x2"*{x_2};
(1,-0.8)*++[o][F-]{\phantom{s}}="x1";
"x1"*{x_1};
{\ar@{-} "Z";"A1"};
{\ar@{-} "Z";"A2"};
{\ar@{-} "Z";"A3"};
{\ar@{-} "Z";"x1"};
{\ar@{-} "A1";"x1"};
{\ar@{-} "Z";"x2"};
{\ar@{-} "A2";"x2"};
{\ar@{-} "Z";"x3"};
{\ar@{-} "A3";"x3"};
\endxy$\bigskip

$\xy/r3pc/:
(0,1.3)*+[o][F-]{1}="A1";
(1,1.3)*+[o][F-]{\phantom{S}}="A1'";
"A1'"*{1'};
(1.5,0.4)*+[o][F-]{2}="A2";
(1,-0.5)*+[o][F-]{\phantom{S}}="A2'";
"A2'"*{2'};
(0,-0.5)*+[o][F-]{3}="A3";
(-0.5,0.4)*+[o][F-]{\phantom{S}}="A3'";
"A3'"*{3'};
(2,-0.5)*++[o][F-]{\phantom{s}}="x1";
"x1"*{x_1};
(-1,-0.5)*++[o][F-]{\phantom{s}}="x2";
"x2"*{x_2};
(0.5,2.1)*++[o][F-]{\phantom{s}}="x3";
"x3"*{x_3};
{\ar@{-} "A1";"A1'"};
{\ar@{-} "A1'";"A2"};
{\ar@{-}@/^/ "A1'";"A3"+(0.11,0.11)};
{\ar@{-}@/^/ "A2'";"A1"};
{\ar@{-} "A2'";"A2"};
{\ar@{-} "A2'";"A3"};
{\ar@{-}@/_/ "A2";"A3'"};
{\ar@{-} "A2'";"A3"};
{\ar@{-} "A3";"A3'"};
{\ar@{-} "A1";"A3'"};
{\ar@{-}@/_1.5pc/"x1";"A1'"};
{\ar@{-} "x1";"A2"};
{\ar@{-}@/^1.5pc/ "x1";"A3"+(0.11,-0.11)};
{\ar@{-}@/_1.3pc/ "x2";"A2'"-(0.12,0.12)};
{\ar@{-}@/^1.5pc/ "x2";"A1"};
{\ar@{-} "x2";"A3"};
{\ar@{-}@/_1.5pc/ "x3";"A3'"};
{\ar@{-} "x3";"A1"};
{\ar@{-}@/^1.5pc/ "x3";"A2"};
\endxy$\qquad
$\xy/r3pc/:
(0,1.3)*+[o][F-]{1}="A1";
(1,1.3)*+[o][F-]{\phantom{S}}="A1'";
"A1'"*{1'};
(1.5,0.4)*+[o][F-]{2}="A2";
(1,-0.5)*+[o][F-]{\phantom{S}}="A2'";
"A2'"*{2'};
(0,-0.5)*+[o][F-]{3}="A3";
(-0.5,0.4)*+[o][F-]{\phantom{S}}="A3'";
"A3'"*{3'};
(2,-0.5)*++[o][F-]{\phantom{s}}="x1";
"x1"*{x_1};
(-1,-0.5)*++[o][F-]{\phantom{s}}="x2";
"x2"*{x_2};
(0.5,2.1)*++[o][F-]{\phantom{s}}="x3";
"x3"*{x_3};
{\ar@{-} "A1";"A1'"};
{\ar@{-} "A1'";"A2"};
{\ar@{-}@/^/ "A1'";"A3"+(0.11,0.11)};
{\ar@{-}@/^/ "A2'";"A1"};
{\ar@{-} "A2'";"A2"};
{\ar@{-} "A2'";"A3"};
{\ar@{-}@/_/ "A2";"A3'"};
{\ar@{-} "A2'";"A3"};
{\ar@{-} "A3";"A3'"};
{\ar@{-} "A1";"A3'"};
{\ar@{-}@/_1.5pc/"x1";"A1'"};
{\ar@{-} "x1";"A2"};
{\ar@{-}@/^1.5pc/ "x1";"A3"+(0.11,-0.11)};
{\ar@{-}@/_1.3pc/ "x2";"A2'"-(0.12,0.12)};
{\ar@{-}@/^1.5pc/ "x2";"A1"};
{\ar@{-} "x2";"A3"};
{\ar@{-}@/_5pc/ "x3";"A3"+(-0.11,0.11)};
{\ar@{-} "x3";"A1'"};
{\ar@{-}@/^4.5pc/ "x3";"A2'"+(0.16,0.08)};
\endxy$\bigskip

\caption{\label{fig:x2} List of all $B$-templates (excluding the edges between
$x_1,x_2,x_3$).}
\end{figure}

\begin{lemma}\label{lem:1}
Let $G$ be a graph with a vertex $x$ and sets of vertices $B_1\subseteq V(G)$,
$B_2\subseteq V(G)$ such that\vspace{-1ex}
\begin{enumerate}[(i)]
\item $B_1\cap B_2\neq\emptyset$,\vspace{-0.7ex}
\item $B_1\cup B_2$ induces in $G$ a complete bipartite graph, and\vspace{-0.7ex}
\item $B_1\cup \{x\}$ and $B_2\cup \{x\}$ induce in $G$ complete bipartite
graphs. \vspace{-1ex}
\end{enumerate}
Then $B_1\cup B_2\cup\{x\}$ induces in $G$ a complete bipartite graph.
\end{lemma}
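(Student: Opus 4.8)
The plan is to reduce everything to a short parity computation, using the fact --- already invoked in this paper, e.g.\ in the proof of Lemma~\ref{thm:3} --- that an induced subgraph $G[S]$ fails to be complete bipartite exactly when some three vertices of $S$ induce a triangle or an induced $\overline{P}_3$. Since a triangle has three edges and $\overline{P}_3$ has one, this says equivalently: $G[S]$ is complete bipartite if and only if every $3$-element subset of $S$ induces a subgraph with an \emph{even} number of edges ($0$ or $2$). I would argue by contradiction: assume $G[B_1\cup B_2\cup\{x\}]$ is not complete bipartite, extract a ``bad'' triple, and contradict its existence.

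First I would normalize. We may assume $x\notin B_1\cup B_2$, for otherwise $B_1\cup B_2\cup\{x\}=B_1\cup B_2$, which induces a complete bipartite graph by~(ii). Now suppose for contradiction that $G[B_1\cup B_2\cup\{x\}]$ is not complete bipartite, and fix three of its vertices inducing a triangle or an induced $\overline{P}_3$. These three cannot all lie in $B_1\cup B_2$ (by~(ii)), so one of them is $x$; and the other two cannot both lie in $B_1$, nor both lie in $B_2$ (by~(iii)), so --- since each of them lies in $B_1\cup B_2$ --- after relabelling they are a vertex $p\in B_1\setminus B_2$ and a vertex $q\in B_2\setminus B_1$. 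Pick $w\in B_1\cap B_2$, which exists by~(i). Because $p\notin B_2$, $q\notin B_1$, $w\in B_1\cap B_2$, and $x\notin B_1\cup B_2$, the vertices $p,q,w,x$ are pairwise distinct.

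Now comes the parity count. The triple $\{p,q,w\}$ is contained in $B_1\cup B_2$, the triple $\{p,w,x\}$ in $B_1\cup\{x\}$, and the triple $\{q,w,x\}$ in $B_2\cup\{x\}$; by~(ii) and~(iii) each of these three vertex sets induces a complete bipartite graph, so by the fact recalled above each of the three triples spans an even number of edges of $G$. Writing $[st]$ for $1$ if $st\in E(G)$ and $0$ otherwise, this means $[pq]+[pw]+[qw]$, $[px]+[pw]+[wx]$ and $[qx]+[qw]+[wx]$ are all even. Summing the three, each of $[pw]$, $[qw]$, $[wx]$ occurs exactly twice and hence drops out of the parity, leaving that $[pq]+[px]+[qx]$ is even. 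But $G[\{p,q,x\}]$ was chosen to be a triangle (three edges) or an induced $\overline{P}_3$ (one edge), so $[pq]+[px]+[qx]$ is odd --- a contradiction. Hence $G[B_1\cup B_2\cup\{x\}]$ is complete bipartite.

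The only delicate point --- the main obstacle --- is the bookkeeping in the middle paragraph: showing that the bad triple necessarily splits as one copy of $x$, one vertex of $B_1\setminus B_2$, and one vertex of $B_2\setminus B_1$, and then checking that adjoining a common vertex $w\in B_1\cap B_2$ yields four pairwise distinct vertices, so that the three auxiliary triples are genuine $3$-element subsets of complete-bipartite vertex sets. Once that is in place, the parity identity finishes the proof in one line, with no need to analyze separately which of the four possible ``bad'' configurations on $\{p,q,x\}$ actually occurs.
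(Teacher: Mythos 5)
Your proof is correct. The opening moves coincide exactly with the paper's: both reduce to a ``bad'' triple inducing a triangle or an induced $\overline{P}_3$, argue (via (ii) and (iii)) that it must consist of $x$ together with some $p\in B_1\setminus B_2$ and $q\in B_2\setminus B_1$, and then adjoin a common vertex $w\in B_1\cap B_2$. Where you diverge is in how the contradiction is extracted. The paper runs an explicit case analysis --- triangle versus $\overline{P}_3$, and within the $\overline{P}_3$ case a further split according to which of $pq$, $px$, $qx$ is the edge --- each branch forcing adjacencies to $w$ one at a time until a forbidden triangle or $\overline{P}_3$ appears inside $B_1\cup\{x\}$ or $B_2\cup\{x\}$. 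You instead observe that ``complete bipartite'' is equivalent to ``every $3$-subset spans an even number of edges'' (which is just a restatement of the triangle/$\overline{P}_3$ characterization the paper already uses in Lemma~\ref{thm:3}), and then a single mod-$2$ sum over the triples $\{p,q,w\}$, $\{p,w,x\}$, $\{q,w,x\}$ forces $\{p,q,x\}$ to span an even number of edges, contradicting its badness. This parity identity replaces all four configurations of the paper's case analysis in one line and is, if anything, cleaner and less error-prone; the paper's version has the mild advantage of staying entirely within the triangle/$\overline{P}_3$ vocabulary used elsewhere in the argument. Your bookkeeping (distinctness of $p,q,w,x$ and containment of each auxiliary triple in the appropriate complete bipartite set) is all verified correctly, so there is no gap.
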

\begin{proof}
Suppose that $G[B_1\cup B_2\cup\{x\}]$ is not a complete bipartite
graph. It follows that there must be vertices $a\in B_1\setminus B_2$ and $b\in
B_2\setminus B_1$ such that $x,a,b$ induce in $G$ either a triangle or a
$\overline P_3$. 

Let $z$ be any vertex of $B_1\cap B_2$.
First, suppose that $x,a,b$ induce a triangle in $G$.  Since $G[B_1\cup B_2]$
is a complete bipartite graph, the vertices $a,b,z$ induce neither a triangle
nor a $\overline P_3$, and hence, up to symmetry, we must have $az\not\in E(G)$
and $bz\in E(G)$.  It follows that $xz\in E(G)$, since otherwise $x,a,z$ induce
a $\overline P_3$ contradicting the fact that $G[B_1\cup \{x\}]$ is a complete
bipartite graph.  Thus $x,b,z$ induce a triangle contradicting the fact
that $G[B_2\cup\{x\}]$ is a complete bipartite graph. 

Hence, we conclude that $x,a,b$ induce a $\overline P_3$.  If $ab\in E(G)$, we
may again assume $az\not\in E(G)$ and $bz\in E(G)$. This yields $xz\in E(G)$,
since otherwise $x,b,z$ induce a $\overline P_3$. Thus $x,a,z$ induce a
$\overline P_3$, a contradiction.  Therefore, $ab\not\in E(G)$, and up to
symmetry, we may assume $ax\in E(G)$, and $bx\not\in E(G)$. Yet again, we
conclude $xz\in E(G)$, since otherwise $bz\not\in E(G)$ which implies $az\not\in E(G)$
and $x,a,z$ induce a $\overline P_3$. Consequently, we have $bz\in E(G)$, since
otherwise $x,b,z$ induce a $\overline P_3$. This implies $az\in E(G)$, since
otherwise $a,b,z$ induce a $\overline P_3$.  But now $x,a,z$ induce a
triangle, a contradiction. 
\end{proof}

\begin{theorem}\label{thm:6}
For every graph $G$, the edge-biclique hypergraph of $G$ is hereditary Helly
if and only if $G$ contains no triangular prism and no $B$-template as an induced subgraph.
\end{theorem}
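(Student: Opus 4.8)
The plan is to prove both directions of the equivalence in Theorem~\ref{thm:6}.

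\medskip
\noindent\textbf{Forward direction (necessity).} First I would show that if $G$ contains a triangular prism or a $B$-template as an induced subgraph, then ${\cal EB}(G)$ is not hereditary Helly, i.e., some induced subgraph $H$ of $G$ has ${\cal EB}(H)$ not Helly. For the triangular prism this is immediate from Lemma~\ref{lem:6.1}. For a $B$-template $H$, the idea is that the complete bipartite graph $B$ in the template extends to a biclique, but the three ``blocker'' vertices $x_1,x_2,x_3$ prevent any single edge from lying in the common extension. Concretely, in Case~1 of the definition, the edges $1z,2z,3z$ pairwise lie in bicliques of $H$ (using condition (a): each $H[B\setminus\{i\}\cup\{x_i\}]$ is complete bipartite, so $B\setminus\{i\}$ extends without vertex $i$), these bicliques pairwise share an edge through $z$, yet condition (b) forces their edge-sets to have empty common intersection — any edge in all three would have to lie in a biclique containing all of $B$, contradicting the fact that $H[B\cup\{x_i\}]$ is not complete bipartite. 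An analogous argument works for Case~2 using the two non-extendability conditions in~(b). Thus ${\cal EB}(H)$ is not Helly, so ${\cal EB}(G)$ is not hereditary Helly.

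\medskip
\noindent\textbf{Backward direction (sufficiency).} Suppose $G$ contains no triangular prism and no $B$-template as an induced subgraph; I must show ${\cal EB}(H)$ is Helly for every induced subgraph $H$ of $G$. Since the forbidden-subgraph conditions are hereditary, it suffices to show ${\cal EB}(G)$ itself is Helly. By Theorem~\ref{thm:5}, since $G$ has no induced triangular prism, this is equivalent to showing the clique hypergraph of $L_G$ is Helly; and by Lemma~\ref{thm:4}, the cliques of $L_G$ are exactly the edge-sets of bicliques of $G$. So I would take bicliques $B_1,B_2,B_3$ of $G$ whose edge-sets pairwise intersect — using the known fact (due to Dragan, cf.\ the Helly-number-3 characterization) that a hypergraph is Helly iff every three pairwise-intersecting hyperedges have a common element — and aim to produce an edge common to all three. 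Pick edges $e_{12}\in E(B_1)\cap E(B_2)$, etc. If the edge-sets have no common edge, I would analyze the structure: the mutual intersections $e_{12},e_{13},e_{23}$ together with carefully chosen extra vertices should be shown to induce either a triangular prism or one of the $B$-templates. The key technical tool is Lemma~\ref{lem:1}: if two complete bipartite vertex sets $B_1,B_2$ overlap and $B_1\cup B_2$ is complete bipartite, then adding a vertex extending both extends their union. Its contrapositive identifies the ``blocker'' vertices $x_i$ witnessing non-extendability, and the case analysis on whether the $e_{ij}$ share vertices, whether $B_1\cap B_2\cap B_3$ contains a vertex or an edge, and how the blockers attach, should land in exactly the configurations enumerated in Figure~\ref{fig:x2}.

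\medskip
\noindent\textbf{Main obstacle.} The hard part will be the backward direction's case analysis: showing that \emph{every} way three pairwise-edge-intersecting bicliques can fail to have a common edge forces one of the finitely many obstructions. One must control the overlap pattern of the three pairwise intersections (a single shared vertex $z$ as in Case~1 of the template, versus an edge-like $\{i,i'\}$ pattern as in Case~2), argue that the biclique-ness of each $B_j$ plus the prism-freeness severely restricts how a ``blocker'' vertex for one pair can interact with the others, and verify that after discarding all vertices except a bounded core we obtain precisely a $B$-template. Lemma~\ref{lem:1} does the heavy lifting for ruling out spurious configurations (any vertex that extends two of the pieces without being a blocker can be absorbed), so the real work is a disciplined enumeration confirming the core is one of the graphs in Figure~\ref{fig:x2}. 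Once that is done, polynomial-time recognition follows immediately since only finitely many forbidden induced subgraphs need to be checked.
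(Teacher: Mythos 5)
Your overall strategy (minimal obstruction hunting, Lemma~\ref{lem:1} to merge non-blocking extensions, case analysis on how the three witness edges overlap) is the same as the paper's, and your forward direction is fine (the paper itself leaves the $B$-template verification as an exercise). But the backward direction rests on a false premise. You reduce to the claim that a hypergraph is Helly as soon as every \emph{three} pairwise-intersecting hyperedges have a common element, attributing this to Dragan. No such fact holds, not even for clique hypergraphs: take $K_4$ on $v_1,v_2,v_3,v_4$ and add vertices $u_1,\dots,u_4$ with $u_i$ adjacent exactly to $\{v_j : j\neq i\}$. The maximal cliques are $\{v_1,v_2,v_3,v_4\}$ and the four sets $C_i=\{u_i\}\cup\{v_j: j\neq i\}$; every three pairwise-intersecting cliques share a vertex, yet $C_1,\dots,C_4$ pairwise intersect with empty total intersection. (What Dragan and Szwarcfiter actually prove is a characterization via extended triangles of \emph{vertices}, not via triples of cliques.) So even if your case analysis succeeded, you would only have shown that every three pairwise edge-intersecting bicliques share an edge, which does not yield the Helly property, and starting from ``three bicliques with no common edge'' may not even be a configuration that occurs in a non-Helly instance.

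The paper avoids this by taking a \emph{minimal} family ${\cal B}=\{B_1,\dots,B_k\}$ of pairwise edge-intersecting bicliques with no common edge, where $k$ may exceed $3$. Minimality gives, for each $i\in\{1,2,3\}$, an edge $e_i$ common to all members of ${\cal B}\setminus\{B_i\}$, and the key property ``$e_i$ is an edge of $G[B_j]$ iff $j\neq i$'' is what drives the case analysis; note that the full family is genuinely used (e.g.\ in Case~2 the contradiction is that some edge $yz$ lies in \emph{all} of $B_1,\dots,B_k$). If you replace your triple-of-bicliques starting point with this minimal-family setup, the remainder of your outline (distinguishing whether $e_1,e_2,e_3$ share a common vertex, share a vertex pairwise, or are disjoint, and using Lemma~\ref{lem:1} to force three distinct blockers $x_1,x_2,x_3$) does line up with the paper's proof. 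A smaller imprecision: the detour through Theorem~\ref{thm:5} and $L_G$ is unnecessary, since Lemma~\ref{thm:4} already identifies the cliques of $L_G$ with the edge-sets of bicliques; the paper argues directly on bicliques.
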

\begin{proof}
For the forward direction, it suffices to verify that the edge-biclique
hypergraph of neither the triangular prism nor any $B$-template is Helly. This
is left for the reader as an excercise.

For the converse, let $G$ be a graph such that ${\cal EB}(G)$ is not Helly, and
let ${\cal B}=\{B_1, B_2,\ldots,B_k\}$ be a minimal family of pairwise edge
intersecting bicliques of $G$ without a common edge. Define
$\mathcal{B}_i=\mathcal{B}\setminus \{B_i\}$ for $i\in\{1,2,3\}$. Since the
family $\cal B$ is minimal, the bicliques in ${\cal B}_i$ have a common edge
$e_i$ for each $i\in\{1,2,3\}$. In addition, $e_i$ is not an edge of $G[B_i]$,
since the bicliques in $\cal B$ have no common edge.  In particular, for each
$i\in\{1,2,3\}$, we have that $e_i$ is an edge of $G[B_j]$ if and only if $j\neq
i$.

There are only three possible cases: the edges $e_1,e_2,e_3$ have a common
vertex, or two of the edges, say $e_2,e_3$ have a common vertex not in $e_1$,
or the three edges share no vertices.\medskip

\noindent {\bf Case 1:} the edges $e_1,e_2,e_3$ have a common vertex $z$.  It
follows that the edges induce a complete bipartite graph with vertices
$\{z,1,2,3\}$ where $e_1=(1,z)$, $e_3=(2,z)$, and $e_3=(3,z)$ as depicted in
Figure \ref{fig:x1}a. By definition, we have $\{z,2,3\}\subseteq B_1$,
$\{z,1,3\}\subseteq B_2$, $\{z,1,2\}\subseteq B_3$, and $1\not\in B_1$,
$2\not\in B_2$, $3\not\in B_3$.  However, $\{z,1,2,3\}$ induces a complete
bipartite graph, and therefore there must exist vertices $x_1\in B_1$, $x_2\in
B_2$, and $x_3\in B_3$ such that none of $\{x_1,z,1,2,3\}$, $\{x_2,z,1,2,3\}$
and $\{x_3,z,1,2,3\}$ induces a complete bipartite graph. In fact, the three
vertices $x_1,x_2,x_3$ must be different.  Suppose otherwise, and say $x_1=x_2$.
Then $\{x_2,z,1,3\}=\{x_1,z,1,3\}$, and hence, $\{x_1,z,1,3\}$, $\{x_1,z,2,3\}$,
and $\{z,1,2,3\}$ induce complete bipartite graphs whereas their union
$\{x_1,z,1,2,3\}$ does not. This contradicts Lemma \ref{lem:1} when applied to
$\{z,1,3\}$, $\{z,2,3\}$ and $x_1$. Hence, the vertices $x_1,x_2,x_3$ are all
distincts yielding a $B$-template $\{x_1,x_2,x_3,z,1,2,3\}$ induced
in $G$.\medskip

{
\noindent {\bf Case 2:} the edges $e_2,e_3$ share a common vertex $z$ not in
$e_1$. It follows that $e_1=(x,y)$, $e_2=(2,z)$, and $e_3=(3,z)$ where $2,3,z$
are distinct vertices, $2\not\in B_2$, $3\not\in B_3$, and $z\not\in\{x,y\}$.
Recall that $e_1\in E(G[B_j])$ if and only if $j\neq 1$.  Thus $x,y\in B_j$ for
all $j\neq 1$, since $e_1=(x,y)$.  This further implies that
$2,3\not\in\{x,y\}$.

Likewise, recall that $e_2\in E(G[B_j])$ for all $j\neq 2$, and $e_3\in
E(G[B_j])$ for all $j\neq 3$. Thus $z\in B_j$ for all $j\in\{1,\ldots,k\}$,
since $z$ is in both $e_2$ and $e_3$. In particular, note that $x,y,z\in B_2$
which implies that one of $xz,yz$ must be an edge, since $B_2$ is a biclique.
By symmetry, assume that $yz\in E(G)$.  We now have two possibilities. 

If $y\not\in B_1$, then we can replace the edge $e_1$ with $e_1'=(y,z)$ to
obtain edges $e_1',e_2,e_3$ satisfying the conditions of Case 1.  On the other
hand, if $y\in B_1$, then both $y$ and $z$ belong to all of the bicliques
$B_1,\ldots,B_k$ and so $yz$ is their common edge.  However, we assume that
there is no such an edge, a contradiction.  \medskip
}

\noindent {\bf Case 3:} the edges $e_1,e_2,e_3$ share no vertices. It is not
difficult to verify that, unless $G$ contains the triangular prism as an induced
subgraph, the edges $e_1,e_2,e_3$ induce a complete bipartite graph with
vertices $\{1,1',2,2',3,3'\}$ where $e_1=(1,1')$, $e_2=(2,2')$, and $e_3=(3,3')$
as depicted in Figure \ref{fig:x1}c. In particular, $\{2,2',3,3'\}\subseteq
B_1$, $\{1,1',3,3'\}\subseteq B_2$, and $\{1,1',2,2'\}\subseteq B_3$.

We show that we may also assume $1,1'\not\in B_1$, $2,2'\not\in B_2$, and
$3,3'\not\in B_3$. Suppose otherwise, say $1\in B_1$.  Then $1'\not\in B_1$,
since $e_1$ is not an edge of $G[B_1]$. If $2\in B_2$, then we can replace $e_1$
with $e_1'=(1',2)$ to obtain edges $e_1',e_2,e_3$ satisfying Case 2. Hence,
$2\not\in B_2$.  Moreover, $3'\not\in B_3$, since otherwise we can replace $e_2$
with $e_2'=(2,3')$ to obtain edges $e_1,e_2',e_3$ satisfying Case 2.
However, now we can replace $e_3$ with $e_3'=(1,3')$ to obtain edges
$e_1,e_2,e_3'$ satisfying Case 2. Therefore, we may conclude  $1\not\in B_1$,
and by symmetry, we have $1,1'\not\in B_1$, $2,2'\not\in B_2$, and $3,3'\not\in
B_3$.

Now, since $\{1,1',2,2',3,3'\}$ induces a complete bipartite graph, there are
again $x_1\in B_1$, $x_2\in B_2$, $x_3\in B_3$ such that none of
$X_1=\{x_1,1,1',2,2',3,3'\}$, $X_2=\{x_1,1,1',2,2',3,3'\}$,
$X_3=\{x_3,1,1',2,2',3,3'\}$ induces a complete bipartite graph. In fact, if
$X_1\setminus \{1\}$ induces a complete bipartite graph, we may replace $B_1$
with a biclique $B_1'$ containing $X_1\setminus \{1\}$ to obtain bicliques
$B_1',B_2,B_3$ satisfying Case~3 for edges $e_1,e_2,e_3$.  However, $1'\in B_1'$
implies that the argument from the above paragraph reduces this situation again
to Case 2. Therefore, we may assume that $X_1\setminus \{1\}$ does induce not a
complete bipartite graph, and by symmetry, none of $X_1\setminus \{1\}$,
$X_1\setminus \{1'\}$, $X_2\setminus \{2\}$, $X_2\setminus \{2'\}$,
$X_3\setminus \{3\}$, $X_3\setminus \{3'\}$ induces a complete bipartite graph.

It remains to observe that the three vertices $x_1,x_2,x_3$ are all distinct.
Indeed, if say $x_1=x_2$, we again contradict Lemma~\ref{lem:1} for
$\{1,1',3,3'\}$, $\{2,2',3,3'\}$ and $x_1$.  Thus
$\{x_1,x_2,x_3,1,1',2,2',3,3'\}$ yields a $B$-template induced in $G$, and this
concludes the proof.  
\end{proof}

Since all forbidden induced subgraphs in the above theorem have at most~9
vertices, we immediately obtain the following consequence.
\begin{corollary}
There is a polynomial time algorithm for the recognition of graphs whose
edge-biclique hypergraph is hereditary Helly.
\end{corollary}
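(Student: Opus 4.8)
The plan is to read the corollary off the finite forbidden induced subgraph characterization established in Theorem~\ref{thm:6}, which states that ${\cal EB}(G)$ is hereditary Helly if and only if $G$ contains no induced triangular prism and no induced $B$-template. The first step is to check that this is genuinely a \emph{finite} list. The triangular prism has six vertices. Every $B$-template consists of a complete bipartite graph $B$ together with three extra vertices $x_1,x_2,x_3$; by the definition of a $B$-template (see also Figure~\ref{fig:x2}), $B$ has either four vertices (type~1) or six vertices (type~2), so each $B$-template has at most nine vertices. Since, up to isomorphism, there are only finitely many graphs on at most nine vertices, the family $\mathcal{F}$ of all forbidden induced subgraphs appearing in Theorem~\ref{thm:6} is finite, and every member of $\mathcal{F}$ has at most nine vertices.

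With finiteness in hand, recognition reduces to a bounded-subgraph search. Given $G$ on $n$ vertices, I would enumerate every vertex subset $S\subseteq V(G)$ with $|S|\le 9$; there are $O(n^9)$ of these. For each such $S$, test whether $G[S]$ is isomorphic to some member of $\mathcal{F}$; since $|S|$ and $|\mathcal{F}|$ are bounded by constants, each test runs in constant time. Report that ${\cal EB}(G)$ is hereditary Helly precisely if no tested $G[S]$ is isomorphic to a forbidden graph. Correctness is exactly Theorem~\ref{thm:6}, and the running time is $O(n^9)$ up to a constant factor, hence polynomial.

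I do not anticipate a real obstacle: the only step needing care is the first one, namely verifying directly from the definition of a $B$-template (and confirming against the exhaustive list in Figure~\ref{fig:x2}) that nine vertices is an absolute upper bound, so that the search depth is a fixed constant independent of $G$. If a smaller exponent were desired, one could instead first locate an induced triangular prism or an induced copy of one of the two graphs $B$ and only then try to extend it by the three vertices $x_1,x_2,x_3$; but for the polynomial-time statement the straightforward enumeration above is entirely sufficient.
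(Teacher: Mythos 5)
Your proposal is correct and matches the paper's argument: the paper likewise deduces the corollary immediately from Theorem~\ref{thm:6} by observing that all forbidden induced subgraphs (the triangular prism and the $B$-templates) have at most $9$ vertices, so a brute-force search over bounded-size vertex subsets gives a polynomial-time test. You simply spell out the $O(n^9)$ enumeration that the paper leaves implicit.
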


\section{Biclique Line Graphs}

Finally, we discuss some additional interesting properties of biclique line
graphs. A word on notation used in this section. By $K_\ell$ and $\overline
K_\ell$ we denote the complete graph on $\ell$ vertices and its complement,
respectively, and $C_{\ell}$ denotes the cycle on $\ell$ vertices.  Other
special graphs we use are shown in Figure~\ref{fig:4}.

First, we have the following property directly from the definition of $L_G$.
\begin{lemma}\label{lem:11}
If $G$ has no triangle and no induced $C_4$, then
$L_G=L(G)$.~\qed\hspace{-1em}
\end{lemma}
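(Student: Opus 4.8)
The plan is to invoke Proposition~\ref{prop:2.1}, which describes exactly when two edges are adjacent in $L_G$: edges $e=uv$ and $e'=u'v'$ are adjacent in $L_G$ if and only if they share an endpoint with the two remaining endpoints non-adjacent, or the four endpoints induce a $C_4$. Since $L_G$ and $L(G)$ both have vertex set $E(G)$, it suffices to show that their edge sets coincide, which we do by establishing the two inclusions separately.

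For $E(L(G))\subseteq E(L_G)$, I would take two edges $e=uv$ and $e'=uv'$ of $G$ sharing the endpoint $u$. Because $G$ has no triangle, $v$ and $v'$ cannot be adjacent (otherwise $u,v,v'$ would form a triangle), so by Proposition~\ref{prop:2.1} the edges $e$ and $e'$ are adjacent in $L_G$. For the reverse inclusion $E(L_G)\subseteq E(L(G))$, suppose $e=uv$ and $e'=u'v'$ are adjacent in $L_G$. By Proposition~\ref{prop:2.1}, either they share an endpoint (in which case they are adjacent in $L(G)$ and we are done), or $u,v,u',v'$ induce a four-cycle in $G$; but the latter is impossible since $G$ has no induced $C_4$. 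Hence $e$ and $e'$ share an endpoint and are adjacent in $L(G)$.

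I do not anticipate any real obstacle here: the statement is an immediate consequence of the adjacency description in Proposition~\ref{prop:2.1} once the two structural hypotheses (no triangle kills the ``non-adjacency'' side condition on shared-endpoint edges; no induced $C_4$ removes the third adjacency case entirely) are applied. The only point requiring a line of care is noting that both graphs are defined on the same vertex set $E(G)$, so equality of edge sets yields $L_G=L(G)$.
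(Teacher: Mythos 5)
Your proof is correct and matches the paper's intent: the paper states this lemma with no written argument, asserting it follows directly from the definition of $L_G$, and your two-inclusion argument via Proposition~\ref{prop:2.1} is exactly the routine verification being elided. Nothing further is needed.
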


\begin{lemma}\label{lem:11b}
If $L_G$ has no induced $\overline K_3$ and no $K_4$, then $L_G=L(G)$.
\end{lemma}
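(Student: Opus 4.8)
\textbf{Proof proposal for Lemma \ref{lem:11b}.}
The plan is to show that under the stated hypothesis every adjacency of $L_G$ is in fact a line-graph adjacency of $G$, i.e.\ arises from two edges of $G$ sharing an endpoint; since every edge of $L(G)$ is trivially an edge of $L_G$ (edges sharing a vertex always lie in a common biclique, namely the star extended maximally), this gives $L_G = L(G)$. By Proposition \ref{prop:2.1}, the only adjacencies of $L_G$ that are \emph{not} adjacencies of $L(G)$ come from the third alternative: two disjoint edges $e=uv$ and $e'=u'v'$ such that $\{u,v,u',v'\}$ induces a four-cycle in $G$ (with $e,e'$ the two ``opposite'' edges of that $C_4$). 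So I would assume for contradiction that such a pair exists and derive either an induced $\overline K_3$ or a $K_4$ in $L_G$.

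First I would fix an induced four-cycle $u,u',v,v'$ of $G$ (so the edges of $G$ on these vertices are $uu', u'v, vv', v'u$, and $uv, u'v' \notin E(G)$), and consider the four edges of $G$ lying on this cycle together with $e=uv$—wait, $uv$ is a non-edge, so instead consider the edges $f_1 = uu'$, $f_2 = u'v$, $f_3 = vv'$, $f_4 = v'u$ of $G$, which form an induced $C_4$ inside $L(G)$ and hence inside $L_G$. Since $G$ has no triangle forced here but we do not assume that, I need to be careful: the hypothesis is on $L_G$, not $G$. The key observation is that among $f_1, f_2, f_3, f_4$ and the ``diagonal'' behaviour, $L_G$ already contains a $C_4$ on $\{f_1,f_2,f_3,f_4\}$ (consecutive edges of the cycle share a vertex, opposite ones do not share a vertex and do not lie in a common biclique because the four cycle-vertices plus nothing gives exactly the $C_4$, and adding either would need the relevant pair adjacent). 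An induced $C_4$ in $L_G$ is an induced $\overline{K_3}$-free, $K_4$-free subgraph, so that alone is not yet a contradiction; I must bring in the extra edge.

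The real content: the adjacency $ee'$ in $L_G$ (for disjoint $e,e'$ forming the $C_4$) means $e$ and $e'$ lie in a common biclique $B$ of $G$. I would examine $G[B]$: it is complete bipartite and contains the four vertices $u,u',v,v'$ arranged so that $uv, u'v'$ are the two edges $e,e'$ of $B$; hence in the bipartition of $B$, $u,v$ are on one side and $u',v'$ on the other. But then $uu', uv', vu', vv'$ are all \emph{non}-edges of $G$? No—$uu' \in E(G)$ by the $C_4$, contradiction. So in fact no biclique of $G$ can contain two disjoint edges $uv$ and $u'v'$ whose endpoints induce a $C_4$ with $uv,u'v'$ as opposite edges, \emph{unless} that $C_4$ is not induced—but Proposition \ref{prop:2.1} requires it induced. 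Hmm, this suggests the third alternative of Proposition \ref{prop:2.1} must be re-read: ``$u,v,u',v'$ induces a four-cycle'' with $e=uv$, $e'=u'v'$ \emph{edges} of that four-cycle, so $uv$ and $u'v'$ are adjacent (consecutive) edges of the $C_4$, sharing no common vertex means they are the two edges not meeting—i.e.\ opposite edges $uv$ and $u'v'$ of a $4$-cycle $u\,v\,?\,?$; the cycle is $u\text{-}v\text{-}u'\text{-}v'\text{-}u$ wait that makes $uv$ and $u'v'$ opposite. So the $4$-cycle has edges $uv, vu', u'v', v'u$, and $e=uv$, $e'=u'v'$ are its two opposite edges. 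Then $G[B] \supseteq$ these with $B$ complete bipartite forces $\{u,u'\}$ vs $\{v,v'\}$ as sides, so $uu', vv' \notin E(G)$ and $uv', vu', uv, u'v'$... this is getting delicate, and \textbf{this case analysis of how the $C_4$ sits inside the biclique $B$ is the main obstacle.} Once it is pinned down that $B$ contains some vertex $w$ making $w$ adjacent (in $L_G$, via edges of $B$) to all of $f_1,f_2,f_3,f_4$ or to a suitable triangle, I would extract a $K_4$ in $L_G$ from three edges of $B$ through a common vertex of $B$ plus one cycle edge, or an induced $\overline{K_3}$ from a triangle of $G$ hidden in $B \cup \{u,v,u',v'\}$; either way contradicting the hypothesis, completing the proof.
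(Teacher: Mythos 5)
Your proposal has two genuine problems, one of which is a false claim that the lemma's hypothesis is specifically there to repair. First, the parenthetical assertion that ``every edge of $L(G)$ is trivially an edge of $L_G$'' is wrong: if $uv$ and $uv'$ share the endpoint $u$ but $vv'\in E(G)$, then by Proposition \ref{prop:2.1} they are \emph{not} adjacent in $L_G$ (no induced complete bipartite subgraph contains the triangle $u,v,v'$). So $E(L(G))\subseteq E(L_G)$ holds only when $G$ is triangle-free, and establishing triangle-freeness is exactly where the no-induced-$\overline K_3$ hypothesis enters: the three edges of a triangle of $G$ are pairwise non-adjacent in $L_G$ (Lemma \ref{lem:4.1}), so forbidding $\overline K_3$ in $L_G$ forbids triangles in $G$. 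Your proof never uses the $\overline K_3$ hypothesis for this purpose and so never closes this half of the equality.

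Second, in the direction you do attempt, you claim that the four edges $f_1,f_2,f_3,f_4$ of an induced four-cycle of $G$ induce a $C_4$ in $L_G$ because ``opposite ones do not lie in a common biclique.'' This is backwards: opposite edges of an induced $C_4$ are adjacent in $L_G$ precisely by the third alternative of Proposition \ref{prop:2.1} (and consecutive ones are adjacent by the first two, since the cycle is induced), so $\{f_1,f_2,f_3,f_4\}$ is a $K_4$ in $L_G$. That single observation finishes the case immediately and makes the entire subsequent analysis of how the $C_4$ sits inside a biclique $B$ --- the part you yourself flag as ``the main obstacle'' and leave unresolved --- unnecessary. Once both facts are in place (no $K_4$ in $L_G$ implies no induced $C_4$ in $G$; no induced $\overline K_3$ in $L_G$ implies no triangle in $G$), Lemma \ref{lem:11} yields $L_G=L(G)$, which is precisely the paper's two-line argument.
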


\begin{proof}
Clearly, if $L_G$ contains no $K_4$, then $G$ contains no induced~$C_4$,
since the edges of any induced $C_4$ in $G$ are always pairwise adjacent in
$L_G$. Also, if $G$ contains a triangle, then $L_G$ contains a $\overline
K_3$, that is, a triple of pairwise non-adjacent vertices, which correspond to
the three edges of the triangle. Consequently, if $L_G$ contains no $K_4$ and
no induced $\overline K_3$, then $G$ has no induced $C_4$ and no triangle.
Hence, $L_G=L(G)$ by Lemma~\ref{lem:11}.
\end{proof}

If we only disallow triangles in $G$, then $L(G)$ becomes a subgraph of
$L_G$, and moreover, we obtain the following characterization.

\begin{theorem} \label{thm:2}
Let $H$ be a graph. Then $H=L_G$ where $G$ is a triangle-free graph
if and only if there exists a set $F\subseteq E(H)$ such that
$H-F=L(G)$ and\vspace{-0.5ex}
\begin{enumerate}[(i)]
\item if $H-F$ contains an induced four-cycle with vertices $a,b,c,d$ and edges
$ab,bc,cd,ad$, then $ac,bd\in F$,\vspace{-1ex}
\item if $ac\in F$, then there exist vertices $b,d$ with $bd\in F$ such that
$a,b,c,d$ induces a four-cycle in $H-F$.
\end{enumerate}
\end{theorem}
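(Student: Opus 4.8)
The plan is to prove both implications by pinning down, for a triangle-free graph $G$, exactly which adjacencies $L_G$ has beyond those of $L(G)$, and then translating induced four-cycles of $L(G)$ into induced four-cycles of $G$ via Proposition~\ref{prop:2.1}. The key starting point is that when $G$ is triangle-free, any two edges of $G$ sharing an endpoint are adjacent in $L_G$ (the ``$u=u'$'' condition of Proposition~\ref{prop:2.1} forces $vv'\notin E(G)$ automatically), so $L(G)$ is a spanning subgraph of $L_G$, as observed just before the theorem; moreover $\{e,e'\}\in E(L_G)\setminus E(L(G))$ if and only if $e$ and $e'$ are vertex-disjoint edges whose four endpoints induce a $C_4$ in $G$. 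Conversely, given an induced $C_4$ of $G$ on distinct vertices $p,q,r,s$ with edges $pq,qr,rs,sp$, the four edges $sp,pq,qr,rs$ of $G$ form an induced four-cycle in $L(G)$ whose two diagonal pairs are precisely the new $L_G$-edges $\{sp,qr\}$ and $\{pq,rs\}$.

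For the forward direction I would take a triangle-free $G$ with $H=L_G$ and set $F:=E(H)\setminus E(L(G))=E(L_G)\setminus E(L(G))$, so that $H-F=L(G)$ trivially, and, by the observation above, $\{e,e'\}\in F$ exactly when $e,e'$ are disjoint edges spanning an induced $C_4$ of $G$. To check (i): from an induced four-cycle $a,b,c,d$ of $L(G)$ with edges $ab,bc,cd,ad$, the non-adjacency of the diagonals gives $a\cap c=b\cap d=\emptyset$, so $b$ must meet $a$ and $c$ in its two (hence distinct) endpoints, and going around the cycle this forces $a=sp$, $b=pq$, $c=qr$, $d=rs$ for four distinct vertices $p,q,r,s$, which (by triangle-freeness) induce a $C_4$ in $G$; then $a,c$ and $b,d$ are disjoint edges spanning this $C_4$, so $ac,bd\in F$. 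To check (ii): if $ac\in F$, take the induced $C_4$ of $G$ on the four endpoints of $a$ and $c$, let $b,d$ be its two crossing edges, and verify via the observation above that $a,b,c,d$ induce a four-cycle of $L(G)=H-F$ with $b,d$ disjoint and spanning the same $C_4$, whence $bd\in F$.

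For the backward direction I would assume $G$ is triangle-free, $H-F=L(G)$, and (i) and (ii), and prove $F=E(L_G)\setminus E(L(G))$; since $E(L(G))\subseteq E(L_G)$ this yields $E(H)=E(L(G))\cup F=E(L_G)$, hence $H=L_G$ with $G$ triangle-free. The inclusion $E(L_G)\setminus E(L(G))\subseteq F$ uses (i): an edge $\{e,e'\}$ of $L_G$ outside $L(G)$ comes from disjoint edges $e,e'$ spanning an induced $C_4$ of $G$, whose two crossing edges $b,d$ give an induced four-cycle $e,b,e',d$ of $L(G)=H-F$, so (i) forces $\{e,e'\}\in F$. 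The reverse inclusion $F\subseteq E(L_G)\setminus E(L(G))$ uses (ii): if $\{e,e'\}\in F$ then $e\cap e'=\emptyset$ (as $\{e,e'\}\notin E(H-F)=E(L(G))$), and (ii) supplies $b,d$ with $\{b,d\}\in F$ and $e,b,e',d$ an induced four-cycle of $L(G)=H-F$; the adjacencies $e\cap b,\ b\cap e',\ e'\cap d,\ d\cap e\neq\emptyset$ together with $e\cap e'=b\cap d=\emptyset$ pin down $e=\{x_1,y_1\}$, $e'=\{x_2,y_2\}$, $b=\{x_1,x_2\}$, $d=\{y_1,y_2\}$ with all four vertices distinct, and triangle-freeness makes $\{x_1,x_2,y_1,y_2\}$ induce a $C_4$ of $G$, so $e$ and $e'$ are adjacent in $L_G$ by Proposition~\ref{prop:2.1}.

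The main obstacle will be the bookkeeping in these translations: one repeatedly has to check that the endpoint shared by two consecutive edges of an induced four-cycle of $L(G)$ is distinct from the other shared endpoints, and that the four vertices of $G$ so produced are pairwise distinct; each such step relies on the non-adjacency of a diagonal pair in the induced four-cycle and on $G$ being triangle-free. One smaller point to be careful about is the reading of the statement: the graph $G$ in ``$H-F=L(G)$'' should be taken triangle-free (equivalently, it is the same $G$ as on the left), since the argument uses $E(L(G))\subseteq E(L_G)$; if an arbitrary $G$ is allowed there, one first observes that conditions (i)--(ii) are restrictive enough to replace $G$ by a triangle-free graph with the same line graph before applying the argument above.
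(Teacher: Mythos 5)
Your proposal is correct and follows essentially the same route as the paper: both directions take $F=E(H)\setminus E(L(G))$ (equivalently characterize $F$ as $E(L_G)\setminus E(L(G))$), rely on triangle-freeness making $L(G)$ a spanning subgraph of $L_G$ whose extra adjacencies come exactly from induced four-cycles of $G$, and verify (i) and (ii) by translating induced four-cycles of $L(G)$ into induced four-cycles of $G$ and back. Your closing caveat that the $G$ in ``$H-F=L(G)$'' must be read as triangle-free agrees with the paper's own reading in its proof of the backward direction.
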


\begin{figure}[h!t]
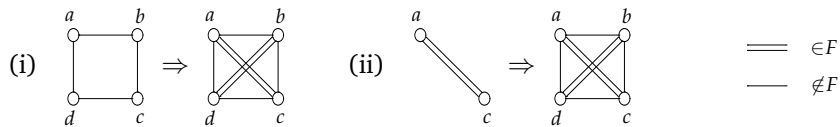

\centering\vskip -2ex
(i)$\quad\xy/r2pc/:
(0,0.5)*[o][F]{\phantom{s}}="a1";
(0,-0.5)*[o][F]{\phantom{s}}="a2";
(1,0.5)*[o][F]{\phantom{s}}="a3";
(1,-0.5)*[o][F]{\phantom{s}}="a4";
{\ar@{-} "a1";"a2"};
{\ar@{-} "a1";"a3"};
{\ar@{-} "a2";"a4"};
{\ar@{-} "a3";"a4"};
"a1"+(-0.05,0.3)*{_a};
"a2"+(-0.05,-0.3)*{_d};
"a3"+(0.05,0.3)*{_b};
"a4"+(0.05,-0.3)*{_c};
\endxy
~\Rightarrow~
\xy/r2pc/:
(0,0.5)*[o][F]{\phantom{s}}="a1";
(0,-0.5)*[o][F]{\phantom{s}}="a2";
(1,0.5)*[o][F]{\phantom{s}}="a3";
(1,-0.5)*[o][F]{\phantom{s}}="a4";
{\ar@{-} "a1";"a2"};
{\ar@{-} "a1";"a3"};
{\ar@{-} "a2";"a4"};
{\ar@{-} "a3";"a4"};
"a1"+(-0.05,0.3)*{_a};
"a2"+(-0.05,-0.3)*{_d};
"a3"+(0.05,0.3)*{_b};
"a4"+(0.05,-0.3)*{_c};
{\ar@{=} "a1";"a4"};
{\ar@{=} "a2";"a3"};
\endxy$
\qquad
(ii)$\quad\xy/r2pc/:
(0,0.5)*[o][F]{\phantom{s}}="a1";
(1,-0.5)*[o][F]{\phantom{s}}="a4";
"a1"+(-0.05,0.3)*{_a};
"a4"+(0.05,-0.3)*{_c};
{\ar@{=} "a1";"a4"};
\endxy
~\Rightarrow~
\xy/r2pc/:
(0,0.5)*[o][F]{\phantom{s}}="a1";
(0,-0.5)*[o][F]{\phantom{s}}="a2";
(1,0.5)*[o][F]{\phantom{s}}="a3";
(1,-0.5)*[o][F]{\phantom{s}}="a4";
{\ar@{-} "a1";"a2"};
{\ar@{-} "a1";"a3"};
{\ar@{-} "a2";"a4"};
{\ar@{-} "a3";"a4"};
"a1"+(-0.05,0.3)*{_a};
"a2"+(-0.05,-0.3)*{_d};
"a3"+(0.05,0.3)*{_b};
"a4"+(0.05,-0.3)*{_c};
{\ar@{=} "a1";"a4"};
{\ar@{=} "a2";"a3"};
\endxy$
\qquad\qquad
$\xy/r2pc/:
(0,-0.3)*{}="a1";
(0.6,-0.3)*{}="a2";
(0,0.3)*{}="a3";
(0.6,0.3)*{}="a4";
{\ar@{-} "a1";"a2"};
{\ar@{=} "a3";"a4"};
"a2"+(0.6,0)*{_{\not\in F}};
"a4"+(0.6,0)*{_{\in F}};
\endxy$
\caption{Conditions (i) and (ii) of Theorem \ref{thm:2}.\label{fig:3}}
\end{figure}

\begin{proof}
Suppose that $H=L_G$ where $G$ is a triangle-free graph.  Since $G$ is
triangle-free,  we have $E(H)\supseteq E(L(G))$. Thus, we choose $F$ to
be the set $F=E(H)\setminus E(L(G))$. Clearly, we have $H-F=L(G)$.  

For the condition (i), let $a,b,c,d$ be an induced four-cycle of $H-F$ with
edges $ab,bc,cd,ad$. Since $H-F$ is the line graph of $G$, it is easy to observe
that $G$ contains a four-cycle whose edges are $a,b,c,d$. Moreover, since $G$ is
triangle-free, this cycle is induced. Thus $a,b,c,d$ induce a complete
subgraph in $H$, and therefore, $ac,bd\in F$.  For the condition (ii), if $ac\in
F$, then $G$ contains an induced four-cycle such that $a,c$ are two opposite
edges of this cycle. Thus, if $b,d$ are the other two edge of this cycle, we
have that $a,b,c,d$ induce a complete subgraph in $H$, and hence, $bd\in F$.

For the other direction, let $F$ be a set of edges of $H$ satisfying the
conditions (i), (ii), and such that $H-F=L(G)$ for some triangle-free graph~$G$.

We show that $H=L_G$.  Suppose that there is an edge $ac\in E(H)$ such that
$ac\not\in E(L_G)$.  Since $G$ is triangle-free, we conclude $ac\not\in
E(L(G))$. Hence, $ac\in F$, and by (ii), there exist $b,d$ such that $a,b,c,d$
induce a four-cycle in $H-F$ and $bd\in F$. Since $H-F$ is a line graph, we
again observe that $G$ contains an induced four-cycle whose edges are $a,b,c,d$.
Thus $ac\in E(L_G)$, a contradiction.  Conversely, suppose that there is
an edge $ac\in E(L_G)$ with $ac\not\in E(H)$.  Since $H-F=L(G)$, we have
$ac\not\in E(L(G))$. Hence, $G$ contains an induced four-cycle whose two
opposite edges are $a,c$.  If $b,d$ are the other two edges of this cycle, we
have that $a,b,c,d$ induce a four-cycle in $L(G)$, and therefore, also in
$H-F$.  Thus, by (i), we have $ac,bd\in F$, and hence, $ac\in E(H)$, a
contradiction.
\end{proof}

Note that the above characterization does not directly imply a polynomial time
algorithm for recognizing biclique line graphs of triangle-free graphs, nor it rules
out such possiblity. It also does not provide any idea about the complexity of
recognizing biclique line graphs of arbitrary graphs.  We remark that the
corresponding problem for line graphs can be solved in polynomial time as
follows from the characterization of \cite{harhol} and from a more general
result of \cite{bieneke}. In these results, polynomial time algorithms are a
consequence of a finite forbidden induced subgraph characterization of line
graphs. This is possible, in particular, because line graphs are closed under
vertex removal. In other words, every induced subgraph of a line graph is again
a line graph. Unfortunately, this is not so for biclique line graphs. In fact,
biclique line graphs are not closed under any of the standard graph operations
(edge, vertex removal, contraction), and hence, it is harder to properly
characterize their structure.  Futhermore, any arbitrary graph can be made to be
an induced subgraph of a biclique line graph as shown in the following claim.

\begin{proposition}\label{prop:13}
For every graph $G$, there exists a graph $G'$ such that $G$ is an induced
subgraph~of~$L_{G'}$.
\end{proposition}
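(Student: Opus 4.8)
The plan is to realize an arbitrary graph $G$ as an induced subgraph of $L_{G'}$ by building $G'$ so that each vertex of $G$ corresponds to a carefully chosen edge of $G'$, and adjacency/non-adjacency in $G$ is forced by the three cases of Proposition~\ref{prop:2.1}. A natural first attempt is to take a bipartite $G'$ (so that $L_{G'}$ has no "shared-endpoint" adjacencies of the triangle type to worry about) and encode each vertex $v\in V(G)$ by an edge $e_v$ of $G'$; then $e_v$ and $e_w$ are adjacent in $L_{G'}$ exactly when they share an endpoint with no edge between the other two endpoints, or when their four endpoints induce a $C_4$. The difficulty is controlling these conditions simultaneously for all pairs, since the edges we pick interact globally through their shared vertices and through unintended induced four-cycles.

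Concretely, I would proceed as follows. First, fix $V(G)=\{v_1,\dots,v_n\}$ and construct $G'$ on a vertex set consisting of $n$ "private" pairs $\{a_i,b_i\}$, designating $e_i=a_ib_i$ as the edge representing $v_i$. For each edge $v_iv_j\in E(G)$ I want $e_i$ and $e_j$ adjacent in $L_{G'}$; the cleanest way is to arrange that $\{a_i,b_i,a_j,b_j\}$ induces a $C_4$ in $G'$ — say add the edges $a_ia_j$ and $b_ib_j$ but not $a_ib_j$ or $a_jb_i$. For each non-edge $v_iv_j\notin E(G)$ I must ensure $e_i,e_j$ are non-adjacent: since the pairs are private they share no endpoint, so by Proposition~\ref{prop:2.1} it suffices that $\{a_i,b_i,a_j,b_j\}$ does not induce a $C_4$; the simplest enforcement is to put no edges at all between $\{a_i,b_i\}$ and $\{a_j,b_j\}$. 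Then I would verify that $G'$ is triangle-free (by construction all added edges go between distinct private pairs, and one checks no triangle can form) and, crucially, that no \emph{unwanted} induced $C_4$ appears among four endpoints coming from three or four different pairs, which would create a spurious adjacency in $L_{G'}$; this is the step I expect to require the most care.

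The main obstacle is precisely this control of extraneous four-cycles and, relatedly, ensuring that each $e_i$ is genuinely an edge of some biclique whose edge set witnesses the intended adjacencies (and does not accidentally force $e_i \sim e_j$ for a non-edge $v_iv_j$). If the naive "add $a_ia_j,b_ib_j$ for each edge" construction produces bad induced $C_4$'s — for instance through paths $a_i a_j a_k$ — I would subdivide or pad: replace each connecting gadget between $\{a_i,b_i\}$ and $\{a_j,b_j\}$ by a longer even structure, or attach to every pair $\{a_i,b_i\}$ an extra private pendant structure so that the relevant four vertices only ever lie in the intended biclique. Once the combinatorics of $G'$ is pinned down, the final verification is routine: appeal to Proposition~\ref{prop:2.1} to read off $E(L_{G'})$ restricted to $\{e_1,\dots,e_n\}$ and confirm it equals $E(G)$, so that $\{e_1,\dots,e_n\}$ induces a copy of $G$ in $L_{G'}$.
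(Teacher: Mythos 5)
Your construction is, up to relabelling, exactly the paper's (second) construction: your ``private pairs'' $\{a_i,b_i\}$ are the two copies $u_1,u_2$ of each vertex $u$ in two disjoint copies $G_1,G_2$ of $G$, the edges $a_ia_j$ and $b_ib_j$ added for each $v_iv_j\in E(G)$ are precisely the edges of $G_1$ and of $G_2$, and the representing edges $e_i=a_ib_i$ are the perfect matching joining the two copies. So the construction is right. What is missing is the verification, which you explicitly leave open, flag as the delicate step, and even propose to repair by subdividing or padding if it fails. It does not fail, and seeing why closes the proof: by Proposition~\ref{prop:2.1}, whether two edges of $G'$ are adjacent in $L_{G'}$ is determined \emph{solely} by the induced subgraph of $G'$ on their (at most four) endpoints; in particular, two disjoint edges are adjacent if and only if their own four endpoints induce a four-cycle. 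A four-cycle passing through vertices of a third pair, or a path such as $a_ia_ja_k$, is simply irrelevant to the adjacency of $e_i$ and $e_j$, so there are no ``spurious'' adjacencies to control and no global interaction to worry about. Concretely, since $a_ib_j,a_jb_i\notin E(G')$ for $i\neq j$, the set $\{a_i,b_i,a_j,b_j\}$ induces a four-cycle if and only if both $a_ia_j$ and $b_ib_j$ are present, i.e., if and only if $v_iv_j\in E(G)$; hence $\{e_1,\dots,e_n\}$ induces exactly $G$ in $L_{G'}$. (Your side worry about each $e_i$ ``genuinely being an edge of some biclique'' is also moot: every edge, and every induced star or induced $C_4$ containing two given edges, extends to a biclique, which is what Proposition~\ref{prop:2.1} already encodes.) Triangle-freeness of $G'$ is neither needed nor claimed.

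For comparison, the paper also gives a one-line alternative you did not consider: let $G'$ be $\overline{G}$ plus a universal vertex $v$; then for the star edges, $xv$ and $yv$ are adjacent in $L_{G'}$ if and only if $xy\notin E(\overline{G})$, i.e., if and only if $xy\in E(G)$.
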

\begin{proof}
We present two constructions. For the first construction, we let $G'$ denote the
graph we obtain by adding to the complement $\overline G$ of $G$ a new vertex
$v$ which we make adjacent to all vertices of $\overline G$. We note that $xy\in
E(G)$ if and only if $xy\not\in E(\overline G)$ if and only if the vertices
corresponding to the edges $xv,yv$ are adjacent in $L_{G'}$. In other words,
the vertices of $L_{G'}$ corresponding to the edges indicent to $v$ induce in
$L_{G'}$ precisely the graph $G$.

For the second construction, we let $G_1$ and $G_2$ denote two disjoint copies
of $G$, and for every vertex $u$ of $G$, we let $u_1$ and $u_2$ denote the
copies of $u$ in $G_1$ and $G_2$, respectively.  We construct the graph $G'$ by
taking the disjoint union of $G_1$ and $G_2$, and adding the edge $u_1u_2$ for
each vertex $u$ of $G$. The graph $G'$ in Figure \ref{fig:5}a illustrates this
construction for $G=K_3$. 
Now, we let $e_u$ denote the vertex of $L_{G'}$ corresponding to the edge
$u_1u_2$ of $G'$.  By Proposition \ref{prop:2.1}, $e_ue_v\in E(L_{G'})$ implies
that $u_1,u_2,v_1,v_2$ is an induced four-cycle of $G'$.  This implies
$u_1v_1\in E(G')$, and hence, $uv\in E(G)$. On the other hand, if $uv\in E(G)$,
then $u_1v_1,u_2v_2\in E(G')$, and hence, $e_ue_v\in E(L_{G'})$, because
$u_1,u_2,v_1,v_2$ induce a four-cycle in $G'$.  Consequently, the subgraph of
$L_{G'}$ induced on $\{e_u~|~u\in V(G)\}$ is precisely the graph $G$.
\end{proof}

\begin{figure}[h!t]
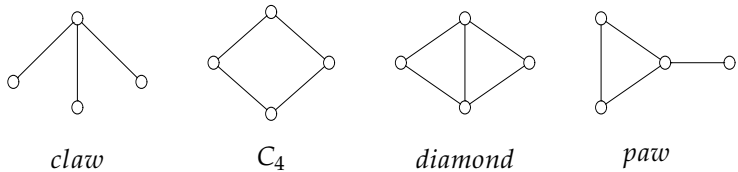

\centering
$\xy/r2pc/:
(0,0.7)*[o][F]{\phantom{s}}="a1";
(-1,-0.3)*[o][F]{\phantom{s}}="a2";
(0,-0.7)*[o][F]{\phantom{s}}="a3";
(1,-0.3)*[o][F]{\phantom{s}}="a4";
{\ar@{-} "a1";"a2"};
{\ar@{-} "a1";"a3"};
{\ar@{-} "a1";"a4"};
(0,-1.5)*{claw};
\endxy$
\qquad
$\xy/r2pc/:
(0,0.8)*[o][F]{\phantom{s}}="a1";
(-0.9,0)*[o][F]{\phantom{s}}="a2";
(0,-0.8)*[o][F]{\phantom{s}}="a3";
(0.9,0)*[o][F]{\phantom{s}}="a4";
{\ar@{-} "a1";"a2"};
{\ar@{-} "a1";"a4"};
{\ar@{-} "a3";"a2"};
{\ar@{-} "a3";"a4"};
(0,-1.5)*{C_4};
\endxy$
\qquad
$\xy/r2pc/:
(0,0.7)*[o][F]{\phantom{s}}="a1";
(-1,0)*[o][F]{\phantom{s}}="a2";
(0,-0.7)*[o][F]{\phantom{s}}="a3";
(1,0)*[o][F]{\phantom{s}}="a4";
{\ar@{-} "a1";"a2"};
{\ar@{-} "a1";"a3"};
{\ar@{-} "a1";"a4"};
{\ar@{-} "a3";"a2"};
{\ar@{-} "a3";"a4"};
(0,-1.5)*{diamond};
\endxy$
\qquad
$\xy/r2pc/:
(-0.7,0.7)*[o][F]{\phantom{s}}="a1";
(1.3,0)*[o][F]{\phantom{s}}="a2";
(-0.7,-0.7)*[o][F]{\phantom{s}}="a3";
(0.3,0)*[o][F]{\phantom{s}}="a4";
{\ar@{-} "a1";"a3"};
{\ar@{-} "a1";"a4"};
{\ar@{-} "a2";"a4"};
{\ar@{-} "a3";"a4"};
(0,-1.5)*{paw};
\endxy$
\caption{The graphs $claw$, $C_4$, $diamond$, and $paw$.\label{fig:4}}
\end{figure}

In order to show that biclique line graphs are not closed under standard
operations, we describe some graphs that are not biclique line graphs.

\begin{proposition}\label{prop:15}
$C_4$, diamond, and claw are not biclique line graphs.
\end{proposition}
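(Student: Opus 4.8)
The plan is to show that for each of the three graphs $H \in \{C_4, \text{diamond}, \text{claw}\}$, there is no graph $G$ with $L_G = H$. The natural strategy is to argue by contradiction: assume $H = L_G$ for some $G$, label the vertices of $H$ as edges of $G$, and then use Proposition~\ref{prop:2.1} (the structural description of adjacency in $L_G$) together with Lemma~\ref{lem:4.1} (the forbidden triangle/$\overline P_3$ obstruction) to derive a contradiction. A key preliminary observation to make is that $L_G$ always has a certain local completeness: if three edges of $G$ pairwise meet at a common vertex and are pairwise non-adjacent-in-$G$-at-the-other-end (i.e., they form a ``biclique star'' $K_{1,3}$ in $G$), they form a triangle in $L_G$; more importantly, by Proposition~\ref{prop:2.1} any two vertices of $L_G$ with many common neighbours are constrained. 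I would begin by disposing of the claw.

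For the claw: suppose $L_G$ is the claw with centre $e_0$ and leaves $e_1, e_2, e_3$. Each $e_i$ is an edge of $G$, and $e_0$ is adjacent in $L_G$ to each $e_i$ while the $e_i$ are pairwise non-adjacent. By Proposition~\ref{prop:2.1}, for each $i$, either $e_0$ and $e_i$ share an endpoint (with the other two endpoints non-adjacent in $G$) or $e_0 \cup e_i$ induces a $C_4$. I would analyse the possible sharing patterns of $e_0$ with $e_1, e_2, e_3$: if $e_0 = xy$ and, say, two of the leaves attach at $x$ and one at $y$ (or some attach via a $C_4$), then one checks that two of the leaves end up adjacent in $L_G$ via Proposition~\ref{prop:2.1}, contradicting that $e_1, e_2, e_3$ form an independent set. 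The case analysis is small: the leaves meeting $e_0$ at a shared endpoint behave like edges of a common biclique star, and the $C_4$-case forces further adjacencies; in every configuration one finds an unwanted edge among the leaves. The diamond is handled similarly but with an extra twist: the diamond has two non-adjacent vertices $u, w$ and two adjacent vertices $v_1, v_2$ each adjacent to both $u$ and $w$; realizing this forces $u$ and $w$ to be two edges of $G$ that are ``almost adjacent'' yet not adjacent in $L_G$, which combined with the two common neighbours $v_1, v_2$ forces (via Lemma~\ref{lem:4.1} or a direct four-cycle argument) either a triangle or $\overline P_3$ among the four relevant vertices of $G$, the contradiction.

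For $C_4$: suppose $L_G$ is the $C_4$ with edges $e_1e_2, e_2e_3, e_3e_4, e_4e_1$ and non-edges $e_1e_3, e_2e_4$. Each consecutive pair is adjacent in $L_G$, so by Proposition~\ref{prop:2.1} each consecutive pair either shares a vertex of $G$ or spans an induced $C_4$ in $G$. I would go through the possibilities for how the four edges $e_1, e_2, e_3, e_4$ of $G$ fit together — counting shared endpoints around the cycle — and show that whichever consistent configuration one picks (four edges forming a path/cycle in $G$, or involving induced $C_4$'s), at least one of the ``diagonal'' pairs $e_1, e_3$ or $e_2, e_4$ ends up adjacent in $L_G$, contradicting that $L_G$ is exactly $C_4$. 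The main obstacle, and the part requiring the most care, is the bookkeeping in these case analyses: because adjacency in $L_G$ has two quite different causes (shared endpoint versus induced four-cycle), each of the three graphs spawns several subcases, and one must be systematic to be sure no configuration is overlooked. A cleaner alternative worth trying first is to exploit general structural facts about $L_G$ proved or implied earlier — for instance that edge-sets of bicliques are cliques of $L_G$ (Lemma~\ref{thm:3}) and that the neighbourhood of any vertex of $L_G$ has restricted structure — to rule out $C_4$, diamond, and claw all at once as induced subgraphs that cannot occur; but failing a slick unified argument, the brute-force Proposition~\ref{prop:2.1} case check will certainly close each case.
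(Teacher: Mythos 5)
Your route is genuinely different from the paper's. The paper does not case-analyse Proposition~\ref{prop:2.1} at all: it first proves Lemma~\ref{lem:11b} (if $L_G$ has no induced $\overline K_3$ and no $K_4$, then $L_G=L(G)$), applies it to $C_4$ and the diamond to reduce the question to ordinary line graphs, and then invokes the uniqueness of the root graph (the only graph with line graph $C_4$ is $C_4$ itself, which has an induced $C_4$; the only graph with line graph the diamond is the paw, which has a triangle). For the claw it uses the contrapositive of Lemma~\ref{lem:11b}: the claw is not a line graph, so $G$ must contain a triangle, whose edges form the unique $\overline K_3$ of the claw, and the fourth edge cannot be adjacent in $L_G$ to all three triangle edges. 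This buys a very short proof at the cost of quoting facts about line graphs; your direct attack via Proposition~\ref{prop:2.1} and Lemma~\ref{lem:4.1} is more self-contained and would also work, since all three target graphs have only four vertices, so $G$ has exactly four edges and the configurations are few.

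That said, as written your argument is a plan rather than a proof: every case analysis is asserted (``one checks'', ``I would go through the possibilities'') but none is carried out, and the one concrete claim you do make is not correct as stated. If two leaves $e_1=xa$ and $e_2=xb$ of the claw attach to the centre $e_0=xy$ at the same endpoint $x$, Proposition~\ref{prop:2.1} does \emph{not} force $e_1e_2\in E(L_G)$; they are adjacent only if $ab\notin E(G)$, so the independent-set hypothesis merely forces $ab\in E(G)$. The contradiction in that subcase comes from elsewhere: $ab$ would be a fifth edge of $G$, hence a fifth vertex of $L_G$, which the claw does not have. This observation --- that $V(L_G)=E(G)$ pins $|E(G)|$ to exactly four, so any forced extra edge of $G$ is itself a contradiction --- is the ingredient that actually closes several of your subcases (and similarly kills the ``induced four-cycle'' branches, since an induced $C_4$ in $G$ contributes four pairwise adjacent vertices to $L_G$, i.e.\ a $K_4$). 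You should make it explicit and then actually execute the finitely many configurations; until then the argument has a genuine gap.
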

\begin{proof}
Clearly, $C_4$ contains no $K_4$ and no triple of pairwise non-adjacent
vertices. Therefore, if $C_4=L_G$ for some graph $G$, we have $L_G=L(G)$
by Lemma \ref{lem:11b}, and $G$ contains no triangle and no induced $C_4$.
However, we must conclude $G=C_4$, since $C_4$ is the only graph whose line
graph is $C_4$, and hence, $G$ contains an induced $C_4$, a contradiction.

Similarly, if $H=diamond$ and $G$ is a graph with $H=L_G$, then $L_G=L(G)$
by Lemma \ref{lem:11b}, since $H$ contains no $K_4$ and no $\overline K_3$. We
must conclude that $G=paw$ (see Figure \ref{fig:4}), which is the only simple
graph whose line graph is $H$. Thus $G$ contains a triangle, a
contradiction.

Finally, let $H=claw$ and $G$ be a graph such that $H=L_G$.  Since $H$ is not
a line graph, we conclude, by Lemma \ref{lem:11b}, that $G$ contains a triangle
or an induced $C_4$. In fact, $G$ contains a triangle, since $H$ has no $K_4$,
and the edges of this triangle form a $\overline K_3$ in $H$. Since there is
only one $\overline K_3$ in $H$, we conclude that $G$ consists of a triangle and
an edge that shares a vertex with every edge of the triangle. However, this is
not possible.
\end{proof}

Now, we see that biclique line graphs are not closed under edge removal, since
$C_4$ is a subgraph of $K_4\cong L_{C_4}$. Similarly, they are not closed under
edge contraction, since $C_5\cong L_{C_5}$ contracts to $C_4$. Moreover, they
are not closed under vertex removal, since, by Proposition~\ref{prop:13}, there
exists a graph $G$ such that $L_G$ contains $C_4$ as an induced
subgraph.\bigskip

Finally, we conclude with the following result.  A graph $H$ is a {\em
hereditary biclique line} graph, if every induced subgraph of $H$ is a biclique
line graph.

\begin{theorem}\label{thm:16}
A graph $H$ is a hereditary biclique line graph if and only if $H$ contains no
induced claw, diamond, or $C_4$.
\end{theorem}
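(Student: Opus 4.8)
The plan is to prove the two directions separately; the forward one is immediate. If $H$ is a hereditary biclique line graph, then every induced subgraph of $H$ is a biclique line graph, so by Proposition~\ref{prop:15} none of them can be a claw, a diamond, or a $C_4$; hence $H$ contains no induced claw, diamond, or $C_4$.

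For the converse, I would first note that the class of graphs with no induced claw, diamond, or $C_4$ is itself hereditary, so it suffices to show that every such graph $H$ is a biclique line graph. I would then reduce to the connected case: if every connected component $H_i$ of $H$ (including single-vertex components) can be written as $H_i = L(R_i)$ for a triangle-free, $C_4$-free graph $R_i$, then the disjoint union $R$ of the $R_i$ is again triangle-free and $C_4$-free, so Lemma~\ref{lem:11} gives $L_R = L(R) = H$, exhibiting $H$ as the biclique line graph of $R$. Thus the whole problem reduces to: given a connected graph $H$ with no induced claw, diamond, or $C_4$, construct a triangle-free, $C_4$-free graph $R$ with $L(R) = H$; feeding this $R$ into Lemma~\ref{lem:11} then finishes the proof, since $L_R = L(R) = H$.

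To build $R$ I would extract from the excluded subgraphs a Krausz clique partition of $H$, that is, a partition of $E(H)$ into cliques in which every vertex lies in at most two parts. Diamond-freeness forces every edge of $H$ to lie in a unique maximal clique: if an edge $uv$ belonged to two distinct maximal cliques, then a vertex of one of them nonadjacent to a vertex of the other induces, together with $u$ and $v$, a diamond. Hence the maximal cliques of $H$ partition $E(H)$. Next, claw-freeness together with this uniqueness forces every vertex to lie in at most two maximal cliques: a vertex in three maximal cliques, together with one of its neighbours chosen from each, induces a claw unless two of those neighbours are adjacent, and in the latter case the edge they span lies in two maximal cliques, a contradiction. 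This is the required Krausz partition, so $H = L(R)$ for the associated root graph $R$ (whose vertices are the maximal cliques of $H$ plus one pendant vertex for each vertex of $H$ lying in a single maximal clique, each vertex of $H$ becoming the edge of $R$ joining the one or two cliques containing it). The same local arguments show $R$ is simple and triangle-free: a triangle of $R$ corresponds to three pairwise intersecting maximal cliques of $H$, which again forces a claw or an edge in two maximal cliques; and $R$ is $C_4$-free, since a four-cycle of the triangle-free graph $R$ has its four edges pairwise adjacent in $L(R)$ precisely for the consecutive pairs, hence occurs as an induced $C_4$ in $L(R) = H$, contrary to hypothesis.

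The main obstacle is this last step — producing the Krausz partition and checking that $R$ is simple, triangle-free, and $C_4$-free. Each check is short, but this is where all three exclusions on $H$ are actually used, and the triangle-freeness of $R$ is exactly the classical fact that a connected claw-free, diamond-free graph is the line graph of a triangle-free graph; alternatively one could invoke the known forbidden-induced-subgraph characterisation of such line graphs and bypass the explicit argument.
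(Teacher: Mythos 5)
Your proof is correct, and its skeleton matches the paper's: the forward direction is exactly the paper's appeal to Proposition~\ref{prop:15}, and the converse in both cases boils down to showing that a claw-, diamond-, $C_4$-free graph is the line graph of a triangle-free graph and that for such a root graph the biclique line graph coincides with the ordinary line graph. The differences are in how the two ingredients are supplied. For the first, the paper simply cites \cite{harhol}, whereas you prove it from scratch via the Krausz partition (unique maximal clique per edge from diamond-freeness, at most two cliques per vertex from claw-freeness); this makes the argument self-contained at the cost of some routine verification, and your phrasing of the "three cliques at a vertex" case should really derive the contradiction from the edge $va_i$ lying in two maximal cliques rather than from the edge $a_1a_2$, but that is a cosmetic repair. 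For the second, the paper invokes Theorem~\ref{thm:2} with $F=\emptyset$ (which only needs $H$ itself to be $C_4$-free), while you instead transfer $C_4$-freeness from $H$ to the root graph $R$ and apply Lemma~\ref{lem:11}; these are equivalent here, since for a triangle-free $R$ the four edges of any four-cycle induce a $C_4$ in $L(R)$, exactly as you observe. Your explicit reduction to connected components is a detail the paper leaves implicit and is handled correctly.
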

\begin{proof}
Clearly, claw, diamond, and $C_4$ are not biclique line graphs by Proposition
\ref{prop:15}. Hence, it follows that if $H$ is a hereditary biclique line
graph, then $H$ contains no induced claw, diamond, or $C_4$.

Conversely, let $H$ be a graph with no induced claw, diamond, or $C_4$, and
suppose that $H$ is not a hereditary biclique line graph. This implies that $H$
contains an induced subgraph $H'$ that is not a biclique line graph.  Clearly,
$H'$ also contains no induced claw, diamond, or $C_4$. 
In \cite{harhol}, it is shown that if $H'$ does not contain these induced
subgraphs, then it must be the line graph of some triangle-free graph $G$.
Therefore, since $H'$ contains no induced~$C_4$, we can apply Theorem \ref{thm:2}
to $H'$ with $F=\emptyset$ to conclude that $H'$ is also the biclique line graph
of $G$, a contradiction.
\end{proof}

\section*{Acknowledgement}
The authors would like to thank anonymous referees for useful suggestions that
helped improve the presentation of this paper.  The first author was partially
supported by grants UBACyT X456, X143 and ANPCyT PICT 1562 and by CONICET. The
second author was supported by the author's NSERC Discovery Grant.  The third
author acknowledges support from Fondation Sciences Math\'ematiques de Paris and
from Kathie Cameron and Ch\'inh Ho\`ang via their respective NSERC grants.

All three authors also gratefully acknowledge the facilities of IRMACS SFU where
most of this research was done.


\begin{thebibliography}{99}

\bibitem{clique-graph-np-hard}
L.~Alc\'{o}n, L.~Faria, C.~M. de~Figueiredo, M.~Gutierrez, The complexity of
  clique graph recognition, Theoretical Computer Science 410 (2009) 2072--2083.

\bibitem{bieneke}
L.~W. Beineke, Characterizations of derived graphs, Journal of Combinatorial
  Theory 9 (1970) 129--135.

\bibitem{berge}
C.~Berge, Hypergraphs, North-Holland Mathematical Library, vol. 45, Elsevier
  Science Publishers B.V., Amsterdam, 1989. 

\bibitem{ref1}
T.~Calamoneri, R.~Petreschi, Edge-clique graphs and the $\lambda$-coloring
  problem, Journal of the Brazilian Computer Society 7 (2001) 38--47.

\bibitem{ref4}
M.~R. Cerioli, Clique graphs and edge-clique graphs, Electronic Notes in
  Discrete Mathematics 13 (2003) 34--37.

\bibitem{ref3}
M.~R. Cerioli, J.~Szwarcfiter, Edge clique graphs and some classes of chordal
  graphs, Discrete Mathematics 242 (2002) 31--39.

\bibitem{ref2}
M.~R. Cerioli, J.~Szwarcfiter, A characterization of edge clique
  graphs, manuscript.

\bibitem{ref5}
G.~Chartrand, S.~F. Kapoor, T.~A. McKee, F.~Saba, Edge clique graphs, Graphs
  and Combinatorics 7 (1991) 253--264.

\bibitem{gol}
M.~C. Golumbic, Algorithmic Graph Theory and Perfect Graphs, Academic Press,
  New York, 1980.

\bibitem{ept}
M.~C. Golumbic, R.~E. Jamison, The edge intersection graphs of paths in a tree,
  Journal Combinatorial Theory B 38 (1985) 8--22.

\bibitem{ref6}
M.~Groshaus, J.~Szwarcfiter, Biclique-{H}elly graphs, Graphs and Combinatorics
  23 (2007) 633--645.

\bibitem{ref7}
M.~Groshaus, J.~Szwarcfiter, On hereditary {H}elly classes of graphs, Discrete
  Mathematics and Theoretical Computer Science 10 (2008) 71--78.

\bibitem{clique-helly}
M.~C. Lin, J.~L. Szwarcfiter, Faster recognition of clique-{H}elly and
  hereditary clique-{H}elly graphs, Information Processing Letters 103 (2007)
  40--43.

\bibitem{harhol}
F.~Harary, C.~Holzmann, Line graphs of bipartite graphs, Revista de la Sociedad
  Matematica de Chile 1 (1974) 19--22.

\bibitem{ref8}
M.~Groshaus, J.~Szwarcfiter, Biclique graphs and biclique matrices, Journal of
  Graph Theory 63 (2010) 1--16.

\bibitem{clique-helly-prisner}
E.~Prisner, Hereditary clique-{H}elly graphs, Journal of Combinatorial
  Mathematics and Combinatorial Computing 14 (1993) 216--220.

\bibitem{ref9}
A.~Raychaudhuri, Intersection number and edge-clique graphs of chordal and
  strongly chordal graphs, Congressus Numerantium 67 (1988) 197--204.

\bibitem{ref10}
A.~Raychaudhuri, Edge-clique graphs of some important classes of graphs, Ars
  Combinatoria 32 (1991) 269--278.

\bibitem{sur-deux}
E.~Szpilrajn-Marczewski, Sur deux propri\'{e}t\'{e}s des classes d'ensembles,
  Fundamenta Mathematicae 33 (1945) 303--307.

\bibitem{west}
D.~West, Introduction to Graph Theory, Prentice Hall, 1996.

\end{thebibliography}
\end{document}